\definecolor{red}{rgb}{1,0,0}
\newcommand{\vvirg}{ , \dots , }
\newcommand{\calB}{\mathcal{B}}
\newcommand{\calC}{\mathcal{C}}
\newcommand{\calF}{\mathcal{F}}
\newcommand{\calH}{\mathcal{H}}
\newcommand{\calI}{\mathcal{I}}
\newcommand{\calP}{\mathcal{P}}
\newcommand{\calY}{\mathcal{Y}}
\newcommand{\calZ}{\mathcal{Z}}
\newcommand{\bbA}{\mathbb{A}}
\newcommand{\bbC}{\mathbb{C}}
\newcommand{\bbG}{\mathbb{G}}
\newcommand{\bbN}{\mathbb{N}}
\newcommand{\bbP}{\mathbb{P}}
\newcommand{\bbR}{\mathbb{R}}
\newcommand{\bbZ}{\mathbb{Z}}
\newcommand{\rmO}{\mathrm{O}}
\renewcommand{\phi}{\varphi}
\renewcommand{\theta}{\vartheta}
\newcommand{\dashto}{\dashrightarrow}
\renewcommand{\hat}[1]{\widehat{#1}}
\renewcommand{\bar}[1]{\overline{#1}}
\newcommand{\id}{\mathrm{id}}
\newcommand{\Id}{\mathrm{Id}}
\renewcommand{\Im}{\mathrm{Im} \;}  
\DeclareMathOperator{\codim}{codim}
\DeclareMathOperator{\End}{End}
\newcommand{\St}{\mathrm{St}}
\newcommand{\Mat}{\mathrm{Mat}}
\renewcommand{\HF}{\mathrm{HF}}
\newcommand{\HP}{\mathrm{HP}}
\newcommand{\fillwidthof}[3][c]
	{%
		\parbox
		{%
			\widthof{#2}%
		}%
		{%
			\ifx#1c%
				\centering#3%
			\else\ifx#1l%
				#3\hfill%
			\else\ifx#1r%
				\hfill#3%
			\fi\fi\fi%
		}%
	}%
\newcommand{\partinto}[2][]{
      \ifthenelse{\equal{#1}{}}{
      {{\scalebox{1.5}[.8]{$\; \vdash$} {#2} }}}{
      {{\underset{\hfill #1}{\scalebox{1.5}[.8]{$\; \vdash$}} {\; #2} }}
      }}
\renewcommand{\St}[2]{\mathrm{St}(#1,#2)}
\newcommand{\SO}[1]{\mathrm{SO}(#1)}
\newcommand{\rk}{\mathrm{rk}}
\newcommand{\vol}{\mathrm{vol}}
\newtheorem{theorem}{Theorem}[section]
 \newtheorem{corollary}{Corollary}[section]
 \newtheorem{lemma}{Lemma}[section]
 \newtheorem{proposition}{Proposition}[section]
  \theoremstyle{definition}
 \newtheorem{definition}{Definition}[section]
 \newtheorem{example}{Example}[section]
 \theoremstyle{remark}
 \renewcommand{\O}{\mathrm{O}}
\newcommand{\ybox}{\picture(1,1)
\put(0,0){\line(0,1){1}\line(1,0){1}}
\put(1,0){\line(0,1){1}}
\put(0,1){\line(1,0){1}}
\endpicture}
\newcommand{\ylabel}[1]{\makebox(1,1){#1}}
\newcommand{\ylabels}[1]{\makebox(1,1)[l]{#1}}
\newcommand{\pbinom}[2]{\begin{pmatrix} #1 \\ #2 \end{pmatrix}}
\subjclass[2010]{(primary) 14M17; (secondary) 05A10, 52B20, 15B10}
\keywords{Stiefel manifold, degree, Parseval frame, Gelfand-Tsetlin polytope, Fano scheme, Hilbert polynomial, non-intersecting lattice paths}
\author[T. Brysiewicz]{Taylor Brysiewicz}
\author[F. Gesmundo]{Fulvio Gesmundo}
\address[T. Brysiewicz]{Department of Mathematics, Texas A\&M University, College Station, TX 77843-3368, USA -- (current) Max Planck Institute for Mathematics in the Sciences, Inselstrasse 22, 04103 Leipzig, Germany}
\address[F. Gesmundo]{QMATH, Department of Mathematical Sciences, University of Copenhagen, Universitetsparken
5, 2100 Copenhagen O., Denmark --  (current) Max Planck Institute for Mathematics in the Sciences, Inselstrasse 22, 04103 Leipzig, Germany}
\email{taylor.brysiewicz@mis.mpg.de}
\email{fulvio.gesmundo@mis.mpg.de}
\newcommand{\rmSt}{\mathrm{St}}
\title{The Degree of Stiefel Manifolds}
\begin{document}

\begin{abstract}
 We compute the degree of Stiefel manifolds, that is, the variety of orthonormal frames in a finite dimensional vector space. Our approach employs techniques from classical algebraic geometry, algebraic combinatorics, and classical invariant theory.
\end{abstract}


\maketitle

\section{Introduction}

Frames are a generalization of bases of (real or complex) vector spaces, where one considers spanning sets that satisfy certain conditions. Formally, a collection of vectors $\{v_i\}_{i \in I}$ in a Hilbert space $\calH$ with inner product $\langle -,- \rangle$ is a \emph{frame} if there exist \emph{frame constants} $A,B \in \mathbb{R}_{>0}$ such that 
\[
A \Vert v \Vert ^2 \leq \sum_{i \in I} |\langle v, v_i \rangle|^2 \leq B \Vert  v \Vert ^2 
\]
where $ \Vert \cdot \Vert$ is the norm induced by the inner product. This set of inequalities is called the \emph{frame condition} and guarantees that $\{ v_i \}_{i \in I}$ spans $\calH$. If the set $I=\{1 \vvirg n\}$ is finite, then $\calH$ is finite dimensional and the frame $\{v_i\}_{i = 1 \vvirg n}$ is called a finite frame.

 A frame is called \emph{tight} if $A=B$ and \emph{Parseval} if $A=B=1$. Frames are extensively studied in linear algebra, functional analysis and operator theory. They find numerous applications in signal processing where they are used to represent signals in compact form while guaranteeing certain desired robustness properties \cite{Mal:WaveletSignalProcessing}.

From a computational point of view, a finite frame in $\mathbb{R}^k$ is encoded by a $k \times n$ matrix $\Phi$ whose columns are the coordinates of the frame vectors $\{v_i\}_{i= 1 \vvirg n}$. The corresponding frame is tight with frame constant $A$ if $\Phi\Phi^T = A\cdot \id_{k}$ and Parseval if $\Phi\Phi^T = \id_{k}$, where $\id_k$ denotes the $k \times k$ identity matrix. This characterizes all finite Parseval frames as the solutions of $\binom{k+1}{2}$ quadratic equations in the entries of a $k\times n$ matrix. In particular, it realizes the set of Parseval frames as an algebraic subvariety of the space of $k \times n$ matrices known as the \emph{Stiefel manifold}. We consider its Zariski closure $\St{k}{n}$ in the space of complex $k \times n$ matrices, $\Mat_{k \times n}(\mathbb{C})$. 

Equivalently, Stiefel manifolds can be realized as collections of $k$ orthonormal vectors in an $n$-dimensional vector space, recorded by the rows of the matrix $\Phi$. In this setting, if $k < n$, the variety $\St{k}{n}$ is naturally identified with the homogeneous space $\SO{n}/\SO{n-k}$ where $\SO{n-k}$ is regarded as the stabilizer of $k$ fixed (complex) orthonormal vectors (see Section \ref{section: preliminaries}). This perspective allows for the use of powerful tools from representation theory and classical invariant theory in the study of Stiefel manifolds.
 
Long standing open problems in finite frame theory have been recently solved by understanding spaces of frames as embedded algebraic varieties  \cite{CahMixStr:ConnIrredFUNTF,NeeSho:SymplGeomConnFrames,Vinz:SmallFrameCertificateInjectivity}. Nonetheless, one of the fundamental invariants of an embedded variety, its \emph{degree}, remains unknown for almost all spaces of frames. When $n=k$, the Stiefel manifold $\St{k}{n}$ coincides with the orthogonal group $\O(n)$ and its degree as a subvariety of the space of $n \times n$ matrices was computed in \cite{BBBKR:DegreeSOn}. The main purpose of this paper is to compute the degree of Stiefel manifolds in general.

\begin{theorem}\label{thm: main theorem}
 Let $n \geq k$. 
\begin{itemize}
 \item Suppose $n \leq 2k-1$ and write $n = 2r$ or $n = 2r+1$ depending on the parity. Then
 \[
  \deg \St{k}{n} = 2^k   \cdot L_{k,n} 
  \]
  where $L_{k,n}$ denotes the number of collections of non-intersecting lattice paths from the points $$A = \{ (-a_i,0) : i=1 \vvirg r\}\quad \text{ to } \quad B = \{(0,b_j) : k = 1 \vvirg r\}$$ with
  \begin{align*}
(a_1 \vvirg a_r) &= ( \underbrace{ k -1, k -2, \vvirg k-(n-k)}_{n-k} , \underbrace{2k-n-2, 2k-n-4  \vvirg n-2r}_{r-(n-k)} ), \\
(b_1 \vvirg b_r) &= ( n -2, n -4, \vvirg n - 2r ).
  \end{align*}
 \item Suppose $n \geq 2k-1$. Then
 \[
  \deg \St{k}{n} = 2^{\binom{k+1}{2}}.
  \]
 \end{itemize}
\end{theorem}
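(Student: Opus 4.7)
The strategy is to compute $\deg \St{k}{n}$ via the Hilbert polynomial of its coordinate ring, exploiting the identification $\St{k}{n} = \SO{n}/\SO{n-k}$ to bring in representation theory and classical invariant theory. Since $\St{k}{n}$ is affine, the degree equals $(\dim \St{k}{n})!$ times the leading coefficient of the Hilbert polynomial of the projective closure $\overline{\St{k}{n}} \subset \mathbb{P}^{kn}$, obtained by homogenizing the defining equations as $\Phi \Phi^T = s^2 \id_k$.

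First I would decompose $\mathbb{C}[\St{k}{n}]$ as an $\SO{n}$-module. By Frobenius reciprocity, $\mathbb{C}[\SO{n}/\SO{n-k}] = \bigoplus_{\lambda} V_\lambda \otimes (V_\lambda^*)^{\SO{n-k}}$, where the $\SO{n-k}$-invariants are enumerated by Gelfand--Tsetlin patterns arising from the iterated branching rule $\SO{n} \downarrow \SO{n-1} \downarrow \cdots \downarrow \SO{n-k}$. The first fundamental theorem of classical invariant theory for $\SO{n}$ acting on $k$-tuples of vectors tells us how this $\SO{n}$-isotypic decomposition is compatible with the grading inherited from $\mathbb{C}[\Mat_{k \times n}]$, pinning down which $\lambda$ contribute in each graded piece.

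Next, applying Weyl's dimension formula (a product over positive roots of $\SO{n}$) and summing over the appropriate patterns yields the Hilbert polynomial. Extracting the leading coefficient as $d \to \infty$ reduces the computation to a volume of a Gelfand--Tsetlin-type polytope, which by the Jacobi--Trudi identity is a determinant of binomials. The Lindström--Gessel--Viennot lemma then identifies this determinant with the number of non-intersecting lattice paths between two endpoint sets. Tracing the endpoints carefully through Weyl's formula should produce exactly the sets $A$ and $B$ of the theorem; the factor $2^k$ is expected to arise from the $(\mathbb{Z}/2\mathbb{Z})^k$-action on $\St{k}{n}$ negating individual rows, which enters multiplicatively in the pattern-counting step.

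For the case $n \geq 2k-1$, the cleanest argument is that the $\binom{k+1}{2}$ quadrics $\Phi \Phi^T = \id_k$ form a regular sequence -- checked by verifying that the Jacobian at a generic Stiefel matrix has full rank, which fails for smaller $n$ -- making $\St{k}{n}$ a complete intersection of degree $2^{\binom{k+1}{2}}$ by Bézout. The main obstacle is the first case: faithfully unpacking the Gelfand--Tsetlin sum to recover the precise lattice-path data with the stated $a_i$'s and $b_j$'s. The parity split $n = 2r$ versus $n = 2r+1$ reflects the different root systems ($D_r$ for $\SO{2r}$ and $B_r$ for $\SO{2r+1}$), each requiring separate bookkeeping of the Weyl denominator, and it is this case-by-case combinatorial matching -- rather than any single conceptual step -- that will demand the most care.
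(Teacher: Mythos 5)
Your plan for the range $n \leq 2k-1$ is essentially the paper's: decompose $\bbC[\SO{n}/\SO{n-k}]$ by the Algebraic Peter--Weyl Theorem, control the filtration coming from the embedding in $\Mat_{k\times n}$, count invariants by iterated branching via Gelfand--Tsetlin patterns, pass to polytope volumes in the leading term of the Hilbert function, and convert the resulting determinant into non-intersecting lattice paths via Lindstr\"om--Gessel--Viennot. Two caveats there. First, your attribution of the factor $2^k$ to the $(\bbZ/2\bbZ)^k$-action negating rows is only a heuristic: that action preserves the irreducible variety $\St{k}{n}$ and does not by itself multiply the degree; in the actual computation the powers of $2$ accumulate from the branching inequalities $\mu_{i,j}\geq \mu_{i,j+1}\geq -\mu_{i,j}$ at the even orthogonal levels (each contributing $\int_{-\mu}^{\mu}=2\int_0^{\mu}$) together with a factor from folding the Weyl chamber of type $D_r$ onto the simplex. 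Second, the determinant of binomials does not come from Jacobi--Trudi but from an explicit evaluation of $\int_{\Delta_r} a_\mu a_\nu\, d\lambda$ as a determinant of factorials; this is a computation you would still have to supply, and it is where the precise endpoint sets $A$ and $B$ actually come from.

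The genuine gap is in the case $n\geq 2k-1$. The criterion you propose --- that the $\binom{k+1}{2}$ quadrics form a regular sequence, ``checked by verifying that the Jacobian at a generic Stiefel matrix has full rank, which fails for smaller $n$'' --- does not work and is false as stated. For every $k\leq n$ the affine variety $\St{k}{n}$ is smooth of codimension exactly $\binom{k+1}{2}$ in $\Mat_{k\times n}$, so the affine quadrics always form a regular sequence and the Jacobian test passes in every case, including $n<2k-1$ where the degree is strictly below the B\'ezout bound. What actually controls whether $\deg\St{k}{n}$ equals $2^{\binom{k+1}{2}}$ is the behavior of the naive homogenization at the hyperplane at infinity: one must show that the scheme $\{A\in\bbP\Mat_{k\times n}: AA^T=0\}$ has dimension strictly less than $\dim\St{k}{n}$, so that no top-dimensional components appear at infinity and the projective closure is a complete intersection. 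This is a nontrivial dimension count (carried out in the paper via an incidence correspondence over the Fano scheme of $s$-planes on the quadric $\sum x_i^2=0$), and it is precisely this count that distinguishes $n\geq 2k-1$ from $n<2k-1$. Without it, your second bullet is unproved.
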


While this theorem gives a combinatorial interpretation to the degrees of Stiefel manifolds, a bijective proof remains elusive. Such a proof amounts to establishing a bijection between intersection points of suitable linear sections of the Stiefel manifolds and collections of non-intersecting lattice paths. This may give a simpler and more direct proof of Theorem \ref{thm: main theorem}. Moreover, it would bolster the use of homotopy methods for studying Stiefel manifolds and their subvarieties. Indeed, an explicit bijection immediately gives a representation of any Stiefel manifold via a \emph{witness set}, the fundamental data type in numerical algebraic geometry \cite{BertiniBook}.

\section{Preliminaries}\label{section: preliminaries}

\subsection{Degree, Hilbert function and Hilbert polynomial}
We introduce some basic notions about the degree of algebraic varieties. The material of this section is classical and we refer to \cite[Lecture 18]{Harris:AlgGeo} and \cite[Section I.1.9]{Eis:CommutativeAlgebra} for formal definitions and an exposition of the theory. We include some basics here for the reader's convenience and to introduce some notation and convention.

We use homogeneous coordinates $x_0 \vvirg x_N$ on the projective space $\bbP^N = \bbP_\bbC^N$. The affine space $\bbA^N$ is identified with the affine chart $\{ x_0 \neq 0\}$ of $\bbP^N$ and its complement $H_\infty = \{ x_0 = 0\}$ is called the \emph{hyperplane at infinity}.

A \emph{variety} is an affine or projective algebraic variety, reduced and possibly reducible. If $X \subseteq \bbA^N$ is affine, write $\bar{X}$ for its closure in $\bbP^N$. We denote by $I_X$ the defining ideal of $X$, which is an ideal in the polynomial ring $\bbC[x_1 \vvirg x_N]$ or $\bbC[x_0\vvirg x_N]$ depending on whether $X$ is affine or projective. Write $\bbC[X]$ for the coordinate ring of $X$, that is, the quotient of the polynomial ring over $I_X$. When $X$ is projective (resp. affine), the natural grading of the polynomial ring induces a grading (resp. filtration) on $\bbC[X]$.

If $X \subseteq \bbA^N$ (resp. $X \subseteq \bbP^N$) is an irreducible variety of dimension $n$, the degree of $X$, denoted $\deg(X)$, is the number of points of intersection of $X \cap L$ where $L$ is a generic linear space of codimension $n$. If $X$ is possibly reducible but equidimensional, then the degree of $X$ is the sum of the degrees of its irreducible components. If $X$ is possibly reducible and possibly not equidimensional, then the degree of $X$ is the degree of the union of the components of largest dimension. It is immediate that $\deg(X) = \deg(\bar{X})$.

Fix a projective variety $X$ of codimension $c$ and suppose $I_X$ is generated by $c$ homogeneous polynomials $f_1 \vvirg f_c$ of degree $d_1 \vvirg d_c$ respectively. Then $\deg(X) = d_1 \cdots d_c$ and $X$ is called a \emph{complete intersection}. More generally, for any variety $X$ of codimension $c$, the ideal $I_X$ is generated by at least $c$ homogeneous polynomials: the product of their degrees is called the \emph{B\'ezout bound} and always serves as an upper bound for $\deg(X)$.

The \emph{Hilbert function} of $X$ is the function $\HF_X : \bbN \to \bbN$, defined by $\HF_X( t) = \dim (\bbC[X])_{\leq t}$ or $\HF_X (t) = \dim \bbC[X]_{t}$ depending on whether $X$ is affine or projective. The Hilbert function is eventually a polynomial: there exists a univariate polynomial $\HP_X(t)$, called the \emph{Hilbert polynomial} of $X$, with the property that $\HF_X(t) = \HP_X(t)$ for $t \gg 0$. Moreover, the degree of $\HP_X$ is $\dim X$ and its leading coefficient is $\dfrac{\deg(X)}{ \dim (X) !}$.

Given a polynomial $f \in \bbC[x_1 \vvirg x_N]$, write $\hat{f}$ for its homogenization via $x_0$, i.e. the unique homogeneous polynomial in $\bbC[x_0 \vvirg x_N]$ with $\deg(f) = \deg(\hat{f})$ such that $ {\hat{f}}|_{x_0 = 1} = f$. If $X$ is an affine variety and $f_1 \vvirg f_\ell$ are generators of its ideal $I_X$ then ${\hat{f}_1} \vvirg {\hat{f}_\ell}$ cut out a scheme in $\bbP^N$ which is possibly not reduced; we call this scheme the \emph{naive homogenization} of $X$ (with respect to the chosen generators). We have the following elementary fact.
\begin{lemma}\label{lemma: naive homogenization}
 Let $X \subseteq \bbA^N$ be an affine variety and let $f_1 \vvirg f_\ell$ be generators of $I_X$. Let $Y \subseteq \bbP^N$ be the naive homogenization of $X$. The irreducible components of $\bar{X}$ are irreducible components of $Y$ and every other irreducible component of $Y$ is supported on $H_\infty$.
\end{lemma}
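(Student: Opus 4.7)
The plan is to exploit the fact that dehomogenizing each $\hat{f}_i$ by setting $x_0 = 1$ recovers $f_i$, so the scheme $Y$ restricted to the affine chart $U := \{x_0 \neq 0\} \cong \bbA^N$ coincides with $X$. The argument is essentially topological, so I will work throughout with the underlying reduced structure of $Y$; the possible non-reduced behavior along $H_\infty$ is invisible to the statement, which concerns only irreducible components.

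First I would record the two set-theoretic observations that drive everything. Since $\hat{f}_i(1, x_1 \vvirg x_N) = f_i(x_1 \vvirg x_N)$ and the $f_i$ generate $I_X$, we have $Y \cap U = X$. Consequently $Y$ is a closed subset of $\bbP^N$ containing $X$, so $\bar{X} \subseteq Y$.

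Next I would analyze an arbitrary irreducible component $Z$ of $Y$. If $Z \subseteq H_\infty$ there is nothing to prove. Otherwise $Z \cap U$ is a nonempty open subset of $Z$, hence irreducible and dense in $Z$, and it is a closed irreducible subset of $U$ contained in $Y \cap U = X$. Therefore $Z \cap U$ is contained in some irreducible component $X_0$ of $X$. Taking closures in $\bbP^N$ gives $Z = \overline{Z \cap U} \subseteq \bar{X_0}$. Since $\bar{X_0}$ is irreducible, closed, and contained in $Y$, the maximality of $Z$ as an irreducible closed subset of $Y$ forces $Z = \bar{X_0}$, which is an irreducible component of $\bar X$.

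Finally, the reverse inclusion: given an irreducible component $X_0$ of $X$, its projective closure $\bar{X_0}$ is an irreducible closed subset of $Y$ not contained in $H_\infty$, so it is contained in some irreducible component $Z$ of $Y$ with $Z \not\subseteq H_\infty$. By the previous paragraph $Z = \bar{X'_0}$ for some irreducible component $X'_0$ of $X$, and $X_0 \subseteq X'_0$ by intersecting with $U$. Maximality of $X_0$ inside $X$ then gives $X_0 = X'_0$, so $\bar{X_0} = Z$ is an irreducible component of $Y$. I do not expect a genuine obstacle here; the only mild subtlety is to keep the reduced (topological) and scheme-theoretic pictures cleanly separated, so that the possibility of embedded or non-reduced structure of the naive homogenization along $H_\infty$ does not interfere with the statement about irreducible components.
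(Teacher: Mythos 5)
Your proposal is correct and follows the same route as the paper: both rest on the observation that $Y \cap \{x_0 \neq 0\} = X$ because dehomogenizing the $\hat{f}_i$ recovers the $f_i$, and the component statement is then a formal consequence. The paper leaves that last deduction implicit, whereas you spell it out carefully; the substance is identical.
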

\begin{proof}
Clearly $\bar{X} \subseteq Y$. It suffices to show that $Y \cap \{ x_0 \neq 0\} \subseteq X$. But localizing the equations of $Y$ at $x_0 \neq 0$, one obtains exactly  $f_1 \vvirg f_\ell$, which are defining equations for $X$.
\end{proof}
\begin{corollary}\label{corol: small compts at infinity implies same degree}
 Let $X \subseteq \bbA^N$ be an affine variety and let $f_1 \vvirg f_\ell$ be generators of $I_X$. Let $Y \subseteq \bbP^N$ be the naive homogenization of $X$. If all irreducible components of $Y \cap H_\infty$ have dimension strictly smaller than $\dim X$, then $\deg(X) = \deg(Y)$. 
\end{corollary}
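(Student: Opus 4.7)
The plan is to reduce the statement to a direct application of Lemma \ref{lemma: naive homogenization} together with the conventions on degree for reducible, non-equidimensional varieties recalled earlier in the section.

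First I would unpack the definitions. By the preceding paragraphs we have $\deg(X) = \deg(\bar X)$, and for a possibly reducible, possibly non-equidimensional projective variety $Y$ the number $\deg(Y)$ is defined as the degree of the union of its top-dimensional components. So the goal is to identify the top-dimensional components of $Y$ with those of $\bar X$.

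Next I would invoke Lemma \ref{lemma: naive homogenization}: every irreducible component of $Y$ is either an irreducible component of $\bar X$ or supported on $H_\infty$. Since $\bar X$ and $X$ have the same dimension, the components of $\bar X$ contribute in dimension $\dim X$. By the hypothesis, every component of $Y$ supported on $H_\infty$ lies inside $Y \cap H_\infty$ and therefore has dimension strictly smaller than $\dim X$. Consequently, the top-dimensional components of $Y$ are exactly the top-dimensional components of $\bar X$, namely the components of $\bar X$ of dimension $\dim X$, which are all of them. Therefore
\[
\deg(Y) \;=\; \deg(\bar X) \;=\; \deg(X),
\]
as desired.

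No step should be a real obstacle here: the content is packaged entirely in Lemma \ref{lemma: naive homogenization} and the degree conventions, and the only subtlety is making sure to invoke the correct definition of degree for a scheme that is neither irreducible nor equidimensional. One small care-point is that a component of $\bar X$ cannot coincidentally be contained in $H_\infty$, which is automatic because each component of $\bar X$ meets the affine chart $\{x_0 \neq 0\}$ in a dense open set.
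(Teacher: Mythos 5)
Your proof is correct and is exactly the argument the paper intends: the corollary is stated without proof as an immediate consequence of Lemma \ref{lemma: naive homogenization} together with the stated degree conventions, and you have simply written out that deduction (including the right observation that a component of $Y$ inside $H_\infty$ sits inside some component of $Y\cap H_\infty$ and so inherits the dimension bound). Nothing further is needed.
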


\subsection{Orbits, algebraic groups and semistable points}

We state the Algebraic Peter--Weyl Theorem \cite[Thm. 4.2.7]{GooWal:Symmetry_reps_invs} in full generality for a complex semisimple algebraic group and we describe the application to the special orthogonal group that will be needed in Section \ref{section: repdegree}. Our references for this material are \cite{GooWal:Symmetry_reps_invs,FulHar:RepTh}.

Let $G$ be a complex semisimple algebraic group. Fix a maximal torus $T \subseteq G$ and a Borel subgroup $B$. Denote by $\Lambda$ the weight lattice of $G$ with respect to $T$ and by $\Lambda_+$ the cone of dominant weights with respect to $B$. In other words, $\Lambda_+ = \Lambda \cap W$ where $W$ is the principal Weyl chamber. For a dominant weight $\lambda$, denote by $V_\lambda$ the irreducible representation with highest weight $\lambda$. We point out that if $G$ is not simply connected, then there are dominant weights not corresponding to an irreducible representation of $G$. Denote by $\Lambda_+^G$ the set of integral dominant weights corresponding to representation of $G$.

Fix a $G$-representation $V$ (not necessarily irreducible). Given $w \in V$, let $G_w = \{ g \in G : g\cdot w = w\}$ be the stabilizer of $w$ in $G$, which is a closed subgroup of $G$. An element $w \in V$ is called \emph{semistable} (for the action of $G$) if the orbit $G \cdot w \subseteq V$ is Zariski closed (equivalently Euclidean closed). The set $G \cdot w$ is naturally an abstract algebraic variety $G \cdot w \simeq G/ G_w$, where $G/G_w$ denotes the set of left cosets of $G_w$ in $G$.

In this case, the affine coordinate ring of $G \cdot w$ can be written intrinsically in terms of the representation theory of $G$ and $G_w$, via the Algebraic Peter--Weyl Theorem:
\begin{equation}\label{eq: peter weyl general}
 \bbC[G \cdot w] = \bigoplus _{\lambda \in \Lambda_+^G} V_\lambda \otimes [V_\lambda^*]^{G_w} 
\end{equation}
where $[V_\lambda^*]^{G_w}$ denotes the subspace of $G_w$-invariants in $V_\lambda^*$.

Our goal is to apply the Algebraic Peter--Weyl Theorem to compute the leading coefficient of the Hilbert polynomial of Stiefel manifolds. In general, it is not immediate how the grading of the polynomial ring $\bbC[V]$ descends to a filtration of $\bbC[G \cdot w]$. However, if $G$ can be realized as a closed subgroup of the endomorphism space of $V$, we have the following result.
\begin{lemma}\label{lemma: appearing weights}
 Let $G$ be a semisimple algebraic group, let $V$ be a faithful $G$-representation such that the image of $G$ in $\End(V)$ is closed. Let $w \in V$ be a semistable point. For every dominant weight $\lambda$ of $G$, the summand $V_\lambda \otimes [V_\lambda^*]^{G_w}$ appears in $\bbC[G \cdot w]_{\leq j}$ if and only if $\lambda \in j\calC_V$ where $\calC_V$ is the convex hull of the integral weights occurring in $V$.
\end{lemma}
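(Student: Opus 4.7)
The plan is to use the matrix-coefficient form of Peter--Weyl to identify the filtration $\bbC[G\cdot w]_{\leq j}$ with a sum of isotypic blocks controlled by which irreducibles occur in $\bigoplus_{i\leq j}V^{\otimes i}$, and then to translate this indexing set into the convex-geometric condition $\lambda\in j\calC_V$.

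Since the $G$-action on $V$ is linear, $\bbC[G\cdot w]_{\leq j}$ is a $G$-subrepresentation of $\bbC[G\cdot w]$. By Schur's lemma, its intersection with each Peter--Weyl block $V_\lambda\otimes[V_\lambda^*]^{G_w}$ from \eqref{eq: peter weyl general} has the form $V_\lambda\otimes U_\lambda^{(j)}$ for some $U_\lambda^{(j)}\subseteq[V_\lambda^*]^{G_w}$, so the lemma amounts to showing $U_\lambda^{(j)}=[V_\lambda^*]^{G_w}$ or $0$ according to whether $\lambda\in j\calC_V$. The ``only if'' direction is a weights computation: $\bbC[G\cdot w]_{\leq j}$ is a $G$-quotient of $\bbC[V]_{\leq j}=\bigoplus_{i\leq j}\Sym^i V^*$, and every weight of $\Sym^i V^*$ is a sum of $i$ weights of $V^*$, hence lies in $i\calC_V\subseteq j\calC_V$ upon identifying $\calC_{V^*}=\calC_V$ (automatic for the self-dual $V$ arising in the Stiefel application) and using $0\in\calC_V$, a consequence of $W$-invariance of $\calC_V$ and $\frakh^W=0$ for semisimple $G$.

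For the ``if'' direction I would factor the orbit map through the closed inclusion $G\hookrightarrow \End(V)$. Linear functions on $\End(V)$ restrict to matrix coefficients of $V$, so polynomials of degree $\leq j$ on $\End(V)$ restrict to combinations of matrix coefficients of $V^{\otimes i}$ for $i\leq j$. By the matrix-coefficient form of Peter--Weyl, these span the full Peter--Weyl blocks $V_\mu\otimes V_\mu^*\subseteq\bbC[G]$ whose irreducible $V_\mu$ is a constituent of some $V^{\otimes i}$ with $i\leq j$. Specializing to $v\in (V^{\otimes i})^{G_w}$ produces right-$G_w$-invariant matrix coefficients that descend through $gG_w\mapsto gw$; these descended functions are in fact polynomials of degree $i$ on $V$, as can be verified using the defining relations of $G$ inside $\End(V)$ (e.g.\ the orthogonality relations for $\SO{n}$). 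Thus the full block $V_\mu\otimes [V_\mu^*]^{G_w}$ lies in $\bbC[G\cdot w]_{\leq j}$, and invoking the standard representation-polytope statement --- that for faithful $V$ of semisimple $G$ the dominant integral $\mu$'s arising in $\bigoplus_{i\leq j}V^{\otimes i}$ are exactly those in $j\calC_V\cap \Lambda_+^G$ --- completes the argument.

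The main obstacle is the representation-polytope input used in the last step: its easy half recycles the weights-sum calculation from the first direction, but the reverse inclusion (that every dominant integral point of $j\calC_V$ actually arises as a highest weight in some $V^{\otimes i}$ with $i\leq j$) is a genuine moment-polytope fact that I would cite rather than reprove. A secondary technical point is the descent of the matrix coefficients of $V^{\otimes i}$ to degree-$i$ polynomials on $V$; this uses the equations of $G$ in $\End(V)$ and is essentially where the hypothesis that $G\subseteq\End(V)$ is closed enters the argument.
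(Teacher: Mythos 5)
Your argument is correct and follows essentially the same route as the paper: both reduce the statement to the corresponding fact for $\bbC[G]$ filtered via the closed embedding $G \hookrightarrow \End(V)$ --- the ``representation-polytope'' input you cite at the end is precisely the Claim in the proof of \cite[Theorem 9.1]{DerkKraft:ConstructiveInvTheory} that the paper invokes --- and then transfer it to $\bbC[G\cdot w]$ through the linear orbit map $L \mapsto Lw$. If anything the paper is terser than you are: it only records explicitly that the pullback $\bbC[G\cdot w] \to \bbC[G]$ is filtration-preserving, leaving the converse inclusion (your descent step, and your caveat about $\calC_{V^*}$ versus $\calC_V$) implicit.
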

\begin{proof}
Since $V$ is faithful, we may regard $G$ as a closed subvariety of $\End(V)$. Regard $\End(V) \simeq V^{\oplus \dim V}$ as a $G$-representation with respect to the left-composition by elements of $G$: the integral weights occurring in $V$ are the same as the integral weights occurring in $\End(V)$. The statement holds for $\bbC[G]$ regarded as a quotient of $\bbC[\End(V)]$ from the \emph{Claim} in the proof of \cite[Theorem 9.1]{DerkKraft:ConstructiveInvTheory}. 

Now, consider the linear map 
\begin{align*}
 \End(V) &\to V \\
 L &\mapsto Lw.
\end{align*}
By linearity, the pullback map on coordinate rings $\bbC[V] \to \bbC[\End(V)]$ preserves the grading. A consequence is that the restricted map $G \to G \cdot w$ defined by $g \mapsto g\cdot w$ induces a pullback map on coordinate rings $\bbC[G\cdot w] \to \bbC[G]$ which preserves the filtration given by the grading of the polynomial ring. In particular $\bbC[G \cdot w] _{\leq j }$ is mapped to $\bbC[G]_{\leq j}$.  This concludes the proof.
\end{proof}

\subsection{Representation theory of $\SO{n}$ and branching rules}
We briefly review some basics of the representation theory of $\SO{n}$. We refer to \cite{GooWal:Symmetry_reps_invs,FulHar:RepTh} for an exposition of the theory and to \cite[LaPlanche II, IV]{Bourb:LieGpsLieAlg-46} for the explicit numerical data.

When $n = 2r+1$ is odd, then $\SO{n}$ has dimension $\binom{n}{2}$ and rank $r$. Let $e_1 \vvirg e_r$ be the simple weights. The fundamental weights are $\omega_i = e_1 + \cdots + e_i$ for $ i = 1 \vvirg r-1$ and $\omega_r = \frac{1}{2} (e_1 + \cdots + e_r)$; in particular $\omega_r$ does not provide a representation for $\SO{n}$. The integral cone $\Lambda^{\SO{n}}_+$ is given by the $\bbZ_+$-linear combinations of $\omega_1 \vvirg \omega_{r-1}$ and $2\omega_r$. Equivalently integral linear combinations of the fundamental weights are recorded as partitions $\lambda = (\lambda_1 \vvirg \lambda_r)$ where $\lambda_j$ is the coefficient of $e_j$ in the linear combination. In summary, the irreducible representations of $\SO{2r+1}$ are uniquely determined by a partition of length $r$, that is, an integer sequence $\lambda_1 \geq \cdots \geq \lambda_r \geq 0$.

When $n = 2r$ is even, then $\SO{n}$ has dimension $\binom{n}{2}$ and rank $r$. Let $e_1 \vvirg e_r$ be the simple weights. The fundamental weights are $\omega_i = e_1 + \cdots + e_i$ for $ i = 1 \vvirg r-2$, $\omega_{r-1} = \frac{1}{2}(e_1 + \cdots + e_{r-1}+e_r)$ and $\omega_r = \frac{1}{2}(e_1 + \cdots + e_{r-1}-e_r)$; in particular $\omega_{r-1}$ and $\omega_r$ do not provide representations for $\SO{n}$. The integral cone $\Lambda^{\SO{n}}_+$ is given by the $\bbZ_+$-linear combinations of $\omega_1 \vvirg \omega_{r-2}$, $\omega_{r-1} + \omega_r$ and $\omega_{r-1}-\omega_r$. Equivalently, integral linear combinations of the fundamental weights are recorded as non-increasing sequences $\lambda = (\lambda_1 \vvirg \lambda_r)$ with $\lambda_r$ possibly negative. In summary, the irreducible representations of $\SO{2r}$ are uniquely determined by non-increasing integral sequences $\lambda_1 \geq \vvirg \geq \lambda_{r-1} \geq | \lambda_r|$.

Moreover, it is immediate that for every dominant weight $\lambda$ for $\SO{n}$ we have $V_\lambda \simeq V_\lambda^*$ as $\SO{n}$-representations, and the identification is simply given via contraction with the quadratic form. 

We describe the branching rules for the restriction of representations from $\SO{n}$ to $\SO{n-1}$, realized as the subgroup stabilizing a fixed hyperplane. See \cite[Section 8.3]{GooWal:Symmetry_reps_invs}.
\begin{lemma}[Branching Rules]\label{lemma:branchingRules}
Let $ n = 2r$ be even. Let $\lambda=(\lambda_1 \vvirg \lambda_r)$ be a dominant integral weight for $\SO{2r}$. Then, as an $\SO{n-1}$-representation, $V_\lambda$ reduces to $V_\lambda = \textstyle \bigoplus_{\mu \in \calI} W_\mu $ where $\calI$ ranges over all dominant integral weights $\mu = (\mu_1 \vvirg \mu_{r-1})$ of $\SO{2r-1}$ such that
\[
\lambda_1 \geq \mu_1 \geq \lambda_2 \geq \mu_2 \geq \cdots \geq \lambda_{r-1} \geq \mu_{r-1} \geq |\lambda_r|.
\]

Let $n = 2r+1$ be odd. Let $\lambda=(\lambda_1 \vvirg \lambda_r)$ be a dominant integral weight for $\SO{2r+1}$. Then as an $\SO{n-1}$-representation, $V_\lambda$ reduces to $V_\lambda = \textstyle \bigoplus_{\mu \in \calI} W_\mu $ where $\calI$ ranges over all dominant integral weights $\mu = (\mu_1 \vvirg \mu_{r})$ of $\SO{2r}$ such that
\[
\lambda_1 \geq \mu_1 \geq \lambda_2 \geq \mu_2 \geq \cdots \geq \lambda_{r-1} \geq \mu_{r-1} \geq \lambda_r \geq |\mu_r|.
\]
\end{lemma}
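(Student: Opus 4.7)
The statement is a classical branching rule, and the plan is to derive it from the Weyl character formula combined with the fact that $(\SO{n}, \SO{n-1})$ is a Gelfand pair, so that the restriction $V_\lambda|_{\SO{n-1}}$ is automatically multiplicity-free. This reduces the task to identifying precisely which dominant weights $\mu$ of $\SO{n-1}$ occur in the restriction.

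First, I would fix compatible maximal tori $T' \subset T$, realized by requiring that $T'$ preserves the distinguished hyperplane; then the simple weights of $\SO{n-1}$ arise by specializing one coordinate of the simple weights of $\SO{n}$. Second, I would write $\chi_\lambda^{\SO{n}}$ via the Weyl character formula as a ratio of alternating determinants indexed by the entries of $\lambda$, and perform the torus specialization to obtain the formal character of the restriction as a Laurent polynomial in the coordinates of $T'$. Third, I would expand this specialized expression as a sum of characters $\chi_\mu^{\SO{n-1}}$ using Jacobi--Trudi-type determinantal identities for types $B_r$ and $D_r$; the interlacing inequalities in the statement emerge as the support conditions on the summation indices in this expansion. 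Multiplicity-freeness then guarantees that each admissible $\mu$ appears with coefficient exactly one.

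The main obstacle is managing the sign ambiguity of the last coordinate. When $n = 2r$, the dominant integral weights of $\SO{2r}$ allow $\lambda_r$ to be negative, reflecting the outer automorphism of the $D_r$ Dynkin diagram that swaps the two spin nodes; this is precisely why the interlacing bound in the statement involves $|\lambda_r|$ rather than $\lambda_r$, and the same phenomenon appears symmetrically for $\mu_r$ when restricting from $\SO{2r+1}$ to $\SO{2r}$. Tracking these signs consistently through the alternating sums is the source of the asymmetry between the two cases. An alternative and perhaps cleaner derivation uses the Gelfand--Tsetlin basis for the orthogonal groups: interlacing patterns directly parameterize an orthonormal basis of $V_\lambda$ adapted to the chain $\SO{2} \subset \SO{3} \subset \cdots \subset \SO{n}$, so that the highest weights of the $\SO{n-1}$-summands can be read off immediately. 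Either way, since the result is standard, I would ultimately refer the reader to \cite[Section 8.3]{GooWal:Symmetry_reps_invs} for the full verification.
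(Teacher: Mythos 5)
The paper gives no proof of this lemma at all: it simply states it and cites \cite[Section 8.3]{GooWal:Symmetry_reps_invs}, which is also where your argument ultimately lands. Your sketch of the classical derivation (Weyl character formula, torus specialization, multiplicity-freeness of the restriction) is a correct outline of the standard proof, so your treatment is essentially the same as the paper's.
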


\subsection{Stiefel manifolds}\label{section: prelim stiefel}

This section is devoted to classical results about Stiefel manifolds. For $ k \leq n$, define
\[
\St{k}{n} = \{ A \in \Mat_{k \times n} (\bbC) : AA^T = \id_k\}. 
\]
This is an affine variety whose defining equations are the $\binom{k+1}{2}$ quadrics given by the entries of the symmetric $k \times k$ matrix $AA^T - \id_k$.
The special orthogonal group $\SO{n}$ acts on $\St{k}{n}$ by right multiplication: indeed if $A \in \St{k}{n}$ and $g \in \SO{n}$, we have $(Ag)(Ag)^T = Agg^T A^T = A \id_n A^T = \id_k$. Note that if $n = k$, then $\St{k}{n}$ coincides with the orthogonal group $\rmO(n)$: in particular $\St{n}{n}$ is reducible. 

If $k < n$, then the action of $\SO{n}$ on $\St{k}{n}$ is transitive making $\St{k}{n}$ the orbit of $\SO{n}$ under this action: if $A,B \in \St{k}{n}$ then the rows of $A$ are orthonormal as well as the rows of $B$; since $k< n$, there exists an element $g \in \SO{n}$ sending the rows of $A$ to the rows of $B$. Observe that the stabilizer of $A \in \St{k}{n}$ under this action is the subgroup acting as the identity on the space spanned by the rows of $A$: this is a conjugate of the subgroup $\SO{n-k}$.

We deduce the following classical fact.
\begin{lemma}
If $k < n$, then $\St{k}{n}$ is irreducible and isomorphic to the homogeneous space $\SO{n} / \SO{n-k}$. In particular, $\St{k}{n}$ is smooth, irreducible, reduced and $$\dim \St{k}{n} = \binom{n}{2} - \binom{n-k}{2}.$$
\end{lemma}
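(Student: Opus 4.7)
The statement is essentially that a homogeneous space under a connected semisimple algebraic group is smooth, irreducible, reduced and has the expected dimension; the real work is to carefully establish the transitivity of the $\SO{n}$-action on $\St{k}{n}$ over $\mathbb{C}$ and to identify the stabilizer. My plan is to reduce the statement to three standard facts: (i) transitivity of the action, (ii) computation of the stabilizer, and (iii) properties of orbits of connected algebraic groups.

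The first step is to prove transitivity. Given $A, B \in \St{k}{n}$, interpret their rows as $k$-tuples of vectors in $\bbC^n$ which are orthonormal with respect to the standard non-degenerate symmetric bilinear form $\langle u, v \rangle = u v^T$. I would apply Witt's extension theorem for non-degenerate symmetric bilinear forms over $\bbC$: because $k<n$ and the span of the rows of $A$ is a non-degenerate subspace (its Gram matrix is $\id_k$), one can extend the rows of $A$ to a full basis $\{a_1\vvirg a_n\}$ of $\bbC^n$ with $\langle a_i, a_j\rangle = \delta_{ij}$, and similarly for $B$. The linear map sending one extended basis to the other is then an element of $\rmO(n)$. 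Since $n-k\geq 1$, we may multiply the last extension vector by $-1$ if necessary to arrange that the determinant is $+1$, producing an element of $\SO{n}$ sending rows of $A$ to rows of $B$. This is the subtle step: one must check that over $\bbC$ (as opposed to the Hermitian setting) the extension is available, and that the sign adjustment is possible precisely because $n>k$.

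The second step identifies the stabilizer. Fix $A_0 = (\id_k \mid 0) \in \St{k}{n}$; an element $g \in \SO{n}$ stabilizes $A_0$ (acting by right multiplication, so $A_0 g = A_0$) if and only if $g$ has block form $\begin{pmatrix} \id_k & 0 \\ 0 & h\end{pmatrix}$ with $h \in \SO{n-k}$. Here the upper-left block is forced to be $\id_k$ because the first $k$ rows of $g$ must equal the first $k$ rows of $\id_n$; the block-diagonal form then follows from $g g^T = \id_n$, and the condition $\det g=1$ forces $\det h = 1$. Combined with transitivity, this yields the $\SO{n}$-equivariant bijection $\SO{n}/\SO{n-k} \to \St{k}{n}$, $g\SO{n-k}\mapsto A_0 g$.

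The final step invokes general principles: for a connected algebraic group $G$ acting on an affine variety, any orbit is a smooth, irreducible, locally closed subvariety isomorphic to $G/G_x$ as an abstract variety; here the orbit coincides with all of $\St{k}{n}$ by transitivity, hence $\St{k}{n}$ itself is smooth and irreducible (and in particular reduced). Taking dimensions gives
\[
\dim \St{k}{n} \;=\; \dim \SO{n} - \dim \SO{n-k} \;=\; \binom{n}{2} - \binom{n-k}{2}.
\]
One should also remark that irreducibility as an orbit implies the scheme cut out by the $\binom{k+1}{2}$ quadrics $AA^T - \id_k$ has irreducible underlying variety; reducedness follows because a smooth scheme is reduced, so the affine scheme-theoretic definition agrees with the variety structure. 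The main technical obstacle throughout is the transitivity argument in step one: everything else is essentially formal once one has the identification with $\SO{n}/\SO{n-k}$.
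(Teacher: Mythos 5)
Your proposal is correct and follows essentially the same route as the paper: the paper's argument (given in the paragraph preceding the lemma) also establishes transitivity by extending the $k$ orthonormal rows to an orthonormal basis of $\bbC^n$ using $k<n$, identifies the stabilizer as a conjugate of $\SO{n-k}$, and then deduces smoothness, irreducibility, reducedness and the dimension count from general orbit theory. Your version merely supplies more detail (Witt's extension theorem, the explicit block form of the stabilizer, and the determinant sign adjustment), all of which is consistent with the paper's sketch.
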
 

Thus, the codimension of $\St{k}{n}$ in $\Mat_{k\times n}$ is $nk - (\binom{n}{2} - \binom{n-k}{2}) = \binom{k+1}{2}$, the same as the number of quadrics defining it. As a consequence, we obtain that $\St{k}{n}$ is affinely cut out by these $\binom{k+1}{2}$ quadrics.

\subsection{Outline of the Proof of the Main Theorem}
The proof of Theorem \ref{thm: main theorem} is essentially divided in two parts. 

The first part is purely geometric and pertains to the green entries in Table \ref{table: degree table}. We compute the degree of $\St{k}{n}$ when $n \geq 2k-1$. In this case, the naive homogenization of $\St{k}{n}$ coincides with its closure in projective space $\bar{\St{k}{n}}$, so that $\bar{\St{k}{n}}$ is a complete intersection and its degree equals the B\'ezout bound. The proof relies on a dimension argument, showing that the naive homogenization of $\St{k}{n}$ does not have additional components at infinity in this range. This is the result of Theorem \ref{thm: dim Zinfty} and Theorem \ref{thm: stiefel in bezout bound}.

As noticed in Section \ref{section: prelim stiefel}, when $n=k$, we have $\rmSt(k,n) = \O(n)$. The degrees of the orthogonal groups were determined in \cite{BBBKR:DegreeSOn} and appear in Table \ref{table: degree table} in dark blue.

The rest of the proof is aimed at determining the degrees of $\rmSt(k,n)$ for $k+1 \leq n \leq 2k-2$ which appear in Table \ref{table: degree table} as light blue. In this case the degree of $\St{k}{n}$ is determined by computing the leading coefficient of its Hilbert polynomial. We apply a representation theoretic argument, built on the Algebraic Peter--Weyl Theorem, to the homogeneous space $\SO{n} / \SO{n-k}$. Determining the dimensions of the summands of \eqref{eq: peter weyl general} is difficult. Following the work of \cite{Kaz:NewtonPoly,Brion:ImageApplMoment,Brion:IntroActionsAlgGps,DerkKraft:ConstructiveInvTheory} in the setting of spherical varieties and generic orbits, we reduce the calculation of $\deg \St{k}{n}$ to an integral of certain alternating functions, arising from volumes of \emph{Gelfand--Tsetlin polytopes} associated to the representations of the orthogonal group and its invariant spaces. The proof is performed by an inductive argument which allows us to compute volumes of Gelfand--Tsetlin polytopes as alternating polynomials in the entries of their top row, see Theorem \ref{thm: volume GT general}. The base cases for induction are given by the entries of Table \ref{table: degree table} in dark green and the induction step moves south-east in the table. The degree formula for the degree of $\St{k,n}$ in this range is given in Theorem \ref{thm: degformula}, and its expression in terms of the combinatorics of non-intersecting lattice paths is obtained in Corollary \ref{corol: degreebylatticepaths}.
\begin{table}
\begin{center}
\begin{tabular}{|l|c|c|c|c|c|c|c|c|c|c|}\hline
$k \backslash n$ & 1 & 2 & 3 & 4 & 5 & 6 &7 &8&9&10\\
\hline
1&\cellcolor{green}2 &\cellcolor{green!21}2&\cellcolor{green!21}2&\cellcolor{green!21}2&\cellcolor{green!21}2& \cellcolor{green!21}2&\cellcolor{green!21}2&\cellcolor{green!21}2&\cellcolor{green!21}2&\cellcolor{green!21}2
\\
\hline
2&*&4\cellcolor{blue!50}&\cellcolor{green}8&\cellcolor{green!21}8&\cellcolor{green!21}8&\cellcolor{green!21}8&\cellcolor{green!21}8&\cellcolor{green!21}8&\cellcolor{green!21}8&\cellcolor{green!21}8
\\
\hline
3&*&*&16\cellcolor{blue!50}&40\cellcolor{blue!25}&\cellcolor{green}64&\cellcolor{green!21}64&\cellcolor{green!21}64&\cellcolor{green!21}64&\cellcolor{green!21}64&\cellcolor{green!21}64
\\
\hline
4&*&*&*&80\cellcolor{blue!50}&384\cellcolor{blue!25}&704\cellcolor{blue!25}&\cellcolor{green}1024&\cellcolor{green!21}1024&\cellcolor{green!21}1024&\cellcolor{green!21}1024
\\
\hline
5&*&*&*&*&768\cellcolor{blue!50}&4768\cellcolor{blue!25}&14848\cellcolor{blue!25}&23808\cellcolor{blue!25}&\cellcolor{green}32768&\cellcolor{green!21}32768
\\
\hline
6&*&*&*&*&*&9536\cellcolor{blue!50}&111616\cellcolor{blue!25}&420736\cellcolor{blue!25}&1064960\cellcolor{blue!25}&1581056\cellcolor{blue!25}
\\
\hline
7&*&*&*&*&*&*&\cellcolor{blue!50}223232&\cellcolor{blue!25}3433600 &22429696\cellcolor{blue!25}&66082816\cellcolor{blue!25}
\\
\hline
8&*&*&*&*&*&*&*&\cellcolor{blue!50} 6867200&\cellcolor{blue!25}196968448&1604859904\cellcolor{blue!25}
\\
\hline
9&*&*&*&*&*&*&*&*&\cellcolor{blue!50}393936896&\cellcolor{blue!25}14994641408 
\\
\hline
10&*&*&*&*&*&*&*&*&*&\cellcolor{blue!50}29989282816
\\
\hline
\end{tabular}
\caption{Degrees of Stiefel manifolds: (light green) Theorem \ref{thm: stiefel in bezout bound}, (dark blue)  $\deg(\O(n))$ computed in \cite{BBBKR:DegreeSOn}, (light blue) Theorem \ref{thm: degformula}, (dark green) base of induction for proof of Theorem \ref{thm: degformula}}\label{table: degree table}
\end{center}
\end{table}

\section{Degree of $\rmSt(k,n)$ for $n \geq 2k-1$}\label{section: bezout range}

In this section we prove the first part of Theorem \ref{thm: main theorem} when $n \geq 2k-1$. Regard the space $\Mat_{k \times n}$ as the open subset of $\bbP ( \Mat_{k \times n} \oplus \bbC)$ and let $z_0$ be a coordinate on the direct summand $\bbC$, so that $\Mat_{k\times n}$ is regarded as the principal open set $\{ z_0 \neq 0 \}$ and $H_\infty = \{ z_0 = 0\}$ is the hyperplane at infinity.

Let $\bar{\St{k}{n}}$ be the closure of $\St{k}{n}$ in $\bbP (\Mat_{k \times n} \oplus \bbC)$ and let 
\[
\calZ(k,n) = \{ (A,z_0) \in \bbP (\Mat_{k \times n} \oplus \bbC) :  AA^T - z_0^2 \Id_k = 0\}
\]
be the naive homogenization of $\St{k}{n}$. Let 
\[
\calZ_\infty (k,n) = \calZ(k,n) \cap H_\infty = \{ A \in \bbP \Mat_{k \times n} : AA^T = 0\}. 
\]
First, we compute $\dim  \calZ_{\infty}(k,n)$ following a standard argument via an incidence correspondence over the Fano scheme of the quadric hypersurface. This is similar to the classical argument for determinantal varieties as in \cite[II.2]{ArCoGrHa:Vol1}.

 Given a variety $X \subseteq \bbP V$, denote the \emph{Fano scheme of $s$-planes in $X$} is $$\calF_{s}(X) = \{ E \in \bbG(s,V) : \bbP E \subseteq X\},$$ where $\bbG(s,V)$ denotes the Grassmannian of $s$-planes in $V$.
Let $q_n = x_1 ^2 + \cdots + x_n^2$ and let $Q_n = \{ q_n = 0\} \subseteq \bbP^{n-1}$ be the corresponding quadric hypersurface.

\begin{lemma}\label{lemma: zero iff image in quadric}
Let $A \in \Mat_{k\times n}$, then 
\[
  AA^T = 0 \quad \text{ if and only if } \quad \Im A^T \subseteq Q_n.
 \]
 In particular, $\calZ_{\infty}(k,n) = \{ A \in \Mat_{k \times n} : \Im A^T \in \calF_{\rk(A)}(Q_n)\}$.
\end{lemma}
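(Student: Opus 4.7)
The plan is to reduce the equivalence $AA^T = 0 \Leftrightarrow \Im A^T \subseteq Q_n$ to a statement about the standard symmetric bilinear form $\langle\cdot,\cdot\rangle$ on $\bbC^n$ polar to $q_n$, and then bridge the two conditions via the polarization identity.

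First I would write $A$ in terms of its rows $a_1,\dots,a_k \in \bbC^n$; these are also the columns of $A^T$, so $\Im A^T = \mathrm{span}(a_1,\dots,a_k)$. A direct computation gives that the $(i,j)$-entry of $AA^T$ equals $\sum_{\ell=1}^n a_{i\ell}a_{j\ell} = \langle a_i,a_j\rangle$. Hence the matrix equation $AA^T = 0$ is equivalent to the vanishing $\langle a_i,a_j\rangle = 0$ for all $i,j \in \{1,\dots,k\}$.

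With this reformulation in hand, the forward direction is immediate: any $v = \sum_i c_i a_i \in \Im A^T$ satisfies $q_n(v) = \sum_{i,j} c_i c_j \langle a_i,a_j\rangle = 0$, so $\bbP \Im A^T \subseteq Q_n$. For the converse, assuming $\Im A^T \subseteq Q_n$ means that $q_n$ vanishes on every vector of $\Im A^T$; applying the polarization identity $\langle v,w\rangle = \tfrac12(q_n(v+w) - q_n(v) - q_n(w))$ to $v = a_i$, $w = a_j$ (noting that $a_i$, $a_j$ and $a_i+a_j$ all lie in $\Im A^T$ and are therefore isotropic) forces $\langle a_i, a_j\rangle = 0$, whence $AA^T = 0$.

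For the "in particular" clause, I would observe that $\rk A = \rk A^T = \dim \Im A^T$, so $\Im A^T$ is a point of $\bbG(\rk A, \bbC^n)$; combined with the description of $\calZ_\infty(k,n)$ as the locus $\{A \in \bbP \Mat_{k\times n} : AA^T = 0\}$ read off from the defining equation of $\calZ(k,n)$ at $z_0 = 0$, the equivalence just proved rephrases membership in $\calZ_\infty(k,n)$ as the condition $\bbP \Im A^T \subseteq Q_n$, which by definition of the Fano scheme means $\Im A^T \in \calF_{\rk A}(Q_n)$. I expect no real obstacle here: the entire content of the lemma is the polarization identity over a field of characteristic zero, and the remainder is bookkeeping about ranks and images.
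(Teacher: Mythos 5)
Your proof is correct and is essentially the paper's own argument: the forward direction is the same direct computation $q_n(A^Tc)=c^TAA^Tc$, and your converse via the polarization identity on the rows is just an explicit unpacking of the paper's observation that a symmetric matrix whose associated quadratic form vanishes identically must be zero.
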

\begin{proof}
Suppose $A A^T = 0$ and let $v \in \Im A^T$, with $v = A^T c$ for some $c \in \bbC^k$. Then $q_n(v) =  v^T v = c^T A A^Tc = 0$. Conversely, suppose $q_n(v) = 0$ for every $v \in \Im A^T$, so that  $0 = q_n(A^T c) = c^TAA^Tc$ for every $c \in \bbC^k$. This implies that the quadratic form associated to $AA^T$ is identically $0$ or equivalently $AA^T = 0$. 
\end{proof}

If $s \leq n/2$, then the dimensions of the Fano schemes associated to the quadric are given by
\[
 \dim \calF_{s}(Q_n) = ns - \frac{1}{2} (3 s^2 +s).
\]
If $s > n/2$ then $\calF_{s}(Q_n) = \emptyset$. We refer to \cite[\S6.1]{GrifHar:PrinciplesAlgebraicGeometry} for the proof and additional information on the geometry of the Fano scheme. 

\begin{theorem}\label{thm: dim Zinfty}
For every $k, n$, we have 

\[
 \dim \calZ_{\infty}(k,n) = \left\{ \begin{array}{ll}
                               \frac 1 8 (n^2+4kn-2n-8) & \text{if $n < 2k-1$ and $n$ is odd}\\
                               \frac{1}{8}(n^2+4kn-4k-9) & \text{if $n < 2k-1$ and $n$ is even}\\
                               \binom{n}{2} - \binom{n-k}{2} -1 & \text{if $n \geq 2k-1$ and $n$ is even}
                              \end{array}\right.
\]

In particular, if $n \geq 2k-1$, then $\dim \calZ_{\infty}(k,n) = \dim \rmSt(k,n) -1$.
\end{theorem}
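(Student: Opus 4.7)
The strategy is to stratify $\calZ_\infty(k,n)$ by matrix rank and relate each stratum to the Fano scheme $\calF_s(Q_n)$ via a standard incidence correspondence. Since $\calZ_\infty(k,n) \subseteq \bbP \Mat_{k \times n}$ is the projectivization of the affine cone $\hat{\calZ}_\infty(k,n) = \{A : AA^T = 0\}$, it suffices to compute $\dim \hat{\calZ}_\infty(k,n)$ and then subtract $1$.

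For $0 \leq s \leq \min(k, \lfloor n/2 \rfloor)$, let $\calU_s \subseteq \hat{\calZ}_\infty(k,n)$ denote the locally closed stratum of matrices of rank exactly $s$. By Lemma~\ref{lemma: zero iff image in quadric}, $\hat{\calZ}_\infty(k,n) = \bigsqcup_s \calU_s$, and the map $A \mapsto \Im A^T$ sends $\calU_s$ surjectively onto $\calF_s(Q_n)$ (each isotropic $s$-plane $E$ arises as the row span of some rank-$s$ matrix because $k \geq s$). Its fiber over $E$ consists of matrices whose rows span $E$, an open subset of $\Hom(\bbC^k, E) \cong \bbA^{ks}$, so
\[
\dim \calU_s = ks + \dim \calF_s(Q_n) = ks + sn - \tfrac{1}{2}(3s^2 + s) =: f(s),
\]
using the Fano scheme dimension formula recalled just before the theorem. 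Consequently $\dim \hat{\calZ}_\infty(k,n) = \max_s f(s)$, reducing the problem to integer optimization of the strictly concave function $f$ on $[0, \min(k, \lfloor n/2 \rfloor)]$.

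The successive differences $f(s) - f(s-1) = k + n + 1 - 3s$ identify the maximizer by direct inspection. When $n \geq 2k$, the active bound is $s \leq k$ and $f(k) - f(k-1) = n - 2k + 1 > 0$, giving $\max f = f(k) = kn - \binom{k+1}{2} = \dim \St{k}{n}$. When $n = 2k - 1$, the active bound becomes $s \leq k-1$ and a direct computation yields $f(k-1) = \tfrac{3k(k-1)}{2} = \dim \St{k}{2k-1}$. Thus in both sub-cases of $n \geq 2k-1$ one has $\dim \hat{\calZ}_\infty(k,n) = \dim \St{k}{n}$, which after subtracting $1$ gives the ``In particular'' clause. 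When $n < 2k - 1$, the bound $s \leq \lfloor n/2 \rfloor$ is active and substituting $n = 2r$ or $n = 2r+1$ into $f(\lfloor n/2 \rfloor) - 1$ produces the two parity-dependent expressions in the theorem.

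\textbf{Main obstacle.} The conceptual setup parallels the classical incidence-correspondence analysis of determinantal varieties and presents no deep difficulty. The main technical care lies in (i) restricting to matrices of rank \emph{exactly} $s$ so that the incidence correspondence is birational onto $\calU_s$, (ii) verifying that the fiber dimension $ks$ is uniform across $\calF_s(Q_n)$ — in particular in the maximal case $n = 2s$, where $\calF_s(Q_n)$ splits into two components of equal dimension and so dimension counts are unaffected, and (iii) handling the casework at the boundary $n = 2k - 1$ carefully, since this is precisely where the optimum transitions from $s = k$ to $s = k - 1$ yet the value $\max f$ remains equal to $\dim \St{k}{n}$.
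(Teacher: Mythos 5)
Your proposal is correct and follows essentially the same route as the paper: both compute $\dim\calZ_\infty(k,n)$ by fibering the rank-$s$ locus over the Fano scheme $\calF_s(Q_n)$, obtaining $ks+\dim\calF_s(Q_n)$ (minus one after projectivizing) for each stratum, and then maximizing the resulting concave quadratic in $s$ over $0\le s\le\min(k,\lfloor n/2\rfloor)$, with the same case split at $n=2k-1$. The only differences are cosmetic — you stratify by exact rank and use discrete successive differences where the paper uses an incidence correspondence with a generically one-to-one projection and a real-variable monotonicity argument — and your explicit substitution at $s=\lfloor n/2\rfloor$ in fact shows that the parity labels on the first two cases of the displayed formula should be interchanged ($\tfrac18(n^2+4kn-2n-8)$ occurs for $n$ even, $\tfrac18(n^2+4kn-4k-9)$ for $n$ odd).
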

\begin{proof}
Let $s_{max}  = \min \{ k, \lfloor n/2 \rfloor \}$. For every $s \leq s_{max}$ define 
\[
 \xymatrix{ &\calY_s = \{ (A, E) \in \bbP \Mat_{k \times n} \times \calF_s(Q_n) : \Im A^T \subseteq E \} \ar[ld]^{\pi_1} \ar[rd]_{\pi_2} \\
 \bbP \Mat_{k \times n} & & \calF_s(Q_n)
 }
\]
where $\pi_1,\pi_2$ are the natural projections on the first and second factor. The generic fiber of $\pi_2$ over $E$  is
\[
{ \calY_s}_E := \{ A \in \bbP \Mat_{k\times n} : \Im A^T \subseteq E \} \subseteq \bbP \Mat_{k\times n}
\]
which is a (projective) linear space of dimension $ks -1$. The Theorem of the Dimension of the Fibers \cite[Section I.6.3]{Shaf:BasicAlgGeom1} provides
\[
\dim \calY_s = \dim \calF_s(Q_n) +  {\dim \calY_s}_E = ns - \frac{1}{2} (3 s^2 +s) + ks - 1.
\]

By Lemma \ref{lemma: zero iff image in quadric},  $\calZ_\infty(k,n) = \bigcup _{s = 1} ^{s_{max}} \pi_1(\calY_s)$ and the projection $\pi_1$ is generically one-to-one. This shows that 
\[
 \dim \calZ_\infty(k,n) = \max \left\{ ns - \frac{1}{2} (3 s^2 +s) + ks - 1 : s = 1 \vvirg s_{max} \right\}.
\]
Rewrite $\dim \calY_s = s ( n+k - \frac{1}{2} - \frac{3}{2}s) -1$. As a function of $s$, $\dim \calY_s$ is increasing between $0$ and $\frac{n+k-1/2}{3}$. In particular, $\dim \calY_s$ is increasing on $0 \leq s \leq s_{max}$ whenever $n>2k$ or $n<2k-1$, therefore the maximum value (on an integer) of $\dim \calY_s$ in this range is attained at $s_{max}$. For the remaining two cases of $(k,2k-1)$ and $(k,2k)$, one can check that the same conclusion holds. We obtain 
\[
 \dim \calZ_{\infty}(k,n) = \dim \calY_{s_{max}} = \left\{ \begin{array}{ll}
                               \frac 1 8 (n^2+4kn-2n-8) & \text{if $n < 2k-1$ and $n$ is odd}\\
                               \frac{1}{8}(n^2+4kn-4k-9) & \text{if $n < 2k-1$ and $n$ is even}\\
                               \binom{n}{2} - \binom{n-k}{2} -1 & \text{if $n \geq 2k-1$ and $n$ is even}
                              \end{array}\right.
\]
which concludes the proof.
\end{proof}

A consequence of Theorem \ref{thm: dim Zinfty} is that, when $n \geq 2k-1$, $\calZ_\infty(k, n)$ does not contain irreducible components of $\calZ(k,n)$ of dimension as large as $\dim \rmSt(k,n)$. In fact, $\calZ_\infty(k,n)$ does not contain irreducible components of $\calZ(k,n)$ at all. As a consequence, we obtain,
\begin{theorem}\label{thm: stiefel in bezout bound}
 If $n \geq 2k-1$, then $\bar{\rmSt (k,n)} = \calZ(k,n)$ is a complete intersection of $\binom{k+1}{2}$ quadrics. In particular, $$\deg \rmSt(k,n) = 2^{\binom{k+1}{2}}.$$
\end{theorem}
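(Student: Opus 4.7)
The plan is to combine the dimension count at infinity established in Theorem \ref{thm: dim Zinfty} with the naive-homogenization machinery of Section \ref{section: preliminaries} to force $\calZ(k,n) = \bar{\rmSt(k,n)}$, at which point $\bar{\rmSt(k,n)}$ will be a complete intersection of $\binom{k+1}{2}$ quadrics and B\'ezout supplies the numerical value.

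First I would apply Krull's principal ideal theorem: since $\calZ(k,n)$ is cut out by $\binom{k+1}{2}$ quadrics in $\bbP(\Mat_{k\times n} \oplus \bbC)$, every irreducible component has codimension at most $\binom{k+1}{2}$, hence dimension at least $kn - \binom{k+1}{2} = \dim \rmSt(k,n)$, using the dimension formula from Section \ref{section: prelim stiefel}. By Lemma \ref{lemma: naive homogenization}, each such component is either an irreducible component of $\bar{\rmSt(k,n)}$ or is supported on $H_\infty$. Because $\rmSt(k,n)$ is irreducible of the required codimension $\binom{k+1}{2}$, its closure $\bar{\rmSt(k,n)}$ appears as one component of $\calZ(k,n)$. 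The key step is to rule out any additional component supported on $H_\infty$: such a component would be contained in $\calZ_\infty(k,n)$, but Theorem \ref{thm: dim Zinfty} gives $\dim \calZ_\infty(k,n) = \dim \rmSt(k,n) - 1$ for $n \geq 2k-1$, strictly smaller than the lower bound forced by Krull. The contradiction shows that $\calZ(k,n) = \bar{\rmSt(k,n)}$ set-theoretically, and in fact $\calZ(k,n)$ is equidimensional of codimension exactly equal to the number of defining equations.

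Therefore $\calZ(k,n)$ is a complete intersection in $\bbP(\Mat_{k \times n} \oplus \bbC)$; being Cohen--Macaulay and generically reduced along the smooth open subset $\rmSt(k,n)$, it is reduced, so $\calZ(k,n)=\bar{\rmSt(k,n)}$ scheme-theoretically. B\'ezout's theorem yields $\deg \bar{\rmSt(k,n)} = 2^{\binom{k+1}{2}}$, and $\deg \rmSt(k,n) = \deg \bar{\rmSt(k,n)}$ by definition. Alternatively, one can package the conclusion through Corollary \ref{corol: small compts at infinity implies same degree}, which already equates $\deg \rmSt(k,n)$ with $\deg \calZ(k,n)$ once the dimension bound at infinity is in hand. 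Since the real content---the bound on $\dim \calZ_\infty(k,n)$---has already been extracted in Theorem \ref{thm: dim Zinfty}, no genuinely new difficulty arises in the present proof; the remaining work is purely a matter of assembling the pieces.
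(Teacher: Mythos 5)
Your proposal is correct and follows essentially the same route as the paper: Krull's bound forces every component of $\calZ(k,n)$ to have dimension at least $kn-\binom{k+1}{2}=\dim\rmSt(k,n)$, Theorem \ref{thm: dim Zinfty} excludes components supported on $H_\infty$, and B\'ezout finishes. Your added remarks on reducedness via Cohen--Macaulayness and the alternative packaging through Corollary \ref{corol: small compts at infinity implies same degree} are fine but not substantively different from the paper's argument.
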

\begin{proof}
The equations defining $\calZ(k,n)$ are the entries of $AA^T - z_0^2 \Id_k = 0$. Since $AA^T - z_0\Id_k$ is symmetric, there are at most $\binom{k+1}{2}$ linearly independent equations. Therefore, every irreducible component of $\calZ(k,n)$ has codimension at most $\binom{k+1}{2}$. 

Since $\dim \rmSt(k,n) = \dim \SO{n} - \dim \SO{n-k} = \binom{n}{2} - \binom{n-k}{2}$, by Theorem \ref{thm: dim Zinfty}, we have $\dim \calZ(k,n) = \binom{n}{2} - \binom{n-k}{2}$ as well, so that $\codim \calZ(k,n) = nk - \left[ \binom{n}{2} - \binom{n-k}{2} \right] = \binom{k+1}{2}$. 

This shows that $\calZ(k,n) = \bar{\rmSt(k,n)}$ and in particular it is a complete intersection of the quadrics defined by $AA^T - z_0\Id_k$. By B\'ezout's theorem, we conclude $\deg \calZ(k,n) =  \deg \rmSt(k,n) = 2^{\binom{k+1}{2}}$.
\end{proof}

\section{Degree of $\St{k}{n}$ when $n \leq 2k-1$}\label{section: repdegree}

Theorem \ref{thm: dim Zinfty} shows that when $k \leq  n < 2k-1$, the variety $\calZ(k,n)$ has components at infinity of dimension at least as large as $\dim \bar{\St{k}{n}}$. Therefore, $\deg \St{k}{n}$ is not equal to the B\'ezout bound in these cases.

In this range, we compute the degree by computing the leading coefficient of the Hilbert polynomial of $\St{k}{n}$ via the Algebraic Peter--Weyl Theorem. More precisely, we use
\begin{equation}\label{eq: deg as limit}
\deg \St{k}{n}= N!\lim_{j \to \infty} \frac{\dim \bbC[\St{k}{n}]_{\leq j}}{j^N}
\end{equation}
where $N = \dim \St{k}{n} = \binom{n}{2} - \binom{n-k}{2}$.

The values of $\dim \bbC[\St{k}{n}]_{\leq j}$ will be computed via Lemma \ref{lemma: appearing weights}. Indeed, \eqref{eq: peter weyl general} provides
\[
 \bbC[\St{k}{n}] = \bigoplus_{\lambda \in \Lambda^{\SO{n}}_+} V_{\lambda} \otimes [V_\lambda^*]^{\SO{n-k}}.
\]
The homogeneous space $\St{k}{n} = \SO{n}/\SO{n-k}$ is embedded in $\Mat_{k \times n} \simeq \bbC^k \otimes \bbC^n$, therefore the integral weights occurring in $\Mat_{k\times n}$ are the same as the integral weights occurring in the defining $\SO{n}$-representation $\bbC^n$. Since $\bbC^n = V_{(1)}$, the integral weights occurring in $\bbC^n$ are all the simple weights $\pm e_1 \vvirg \pm e_r$, where $n = 2r$ or $n = 2r+1$ depending on the parity. Denote by $\calC$ the convex hull of $\pm e_1 \vvirg \pm e_r$, that is, the cross-polytope in the weight space $\Lambda_\bbR = \Lambda \otimes_{\bbZ} \bbR$. 
By Lemma \ref{lemma: appearing weights}, we deduce
\begin{equation}\label{eq: schur weyl stiefel graded}
 \bbC[\St{k}{n}]_{\leq j} = \bigoplus_{\lambda \in j\mathcal C \cap \Lambda^{\SO{n}}_+} V_{\lambda} \otimes [V_\lambda^*]^{\SO{n-k}}.
 \end{equation}

In order to determine the dimensions of the direct summands, we introduce the formalism of Gelfand--Tsetlin polytopes.

\subsection{Gelfand--Tsetlin polytopes and invariants}
\begin{definition}
For $m \leq n$, define 
 \[
\calB(m,n) = \{ \lambda ^{\SO{i}} : i = m \vvirg n,\hspace{0.05 in} \lambda ^{\SO{i}} \text{ an integral dominant weight for $\SO{i}$}\}.
\]
The \emph{Bratteli poset} is the poset structure on $\calB(m,n)$ where $\lambda ^{\SO{i}} \preceq \mu^{\SO{j}}$ if and only if $i \leq j$ and $V_{\lambda^{\SO{i}}}$ appears in the decomposition of $V_{ \mu^{\SO{j}}}$ as a $\SO{i}$-representation.

This notion was introduced in \cite{Brat:InductiveLimits}. We refer to \cite{Durand:CombBratteli} for some information on the underlying combinatorial structure.
\end{definition}

\begin{lemma}\label{lemma: invariants from Brattelli}
 Let $\lambda$ be a dominant integral weight for $\SO{n}$. Let $m \leq n$. Then the dimension of the space of $\SO{m}$-invariants $\dim [V_\lambda]^{\SO{m}}$ equals the number of chains from $(0)^{\SO{m}}$ to $\lambda^{\SO{n}}$ in the Bratteli poset $\calB(m,n)$.
\end{lemma}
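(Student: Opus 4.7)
The plan is to prove this by induction on $n-m$, using the branching rules of Lemma \ref{lemma:branchingRules} repeatedly to reduce from $\SO{n}$ down to $\SO{m}$ one step at a time. The underlying observation is that $\dim [V_\lambda]^{\SO{m}}$ is the multiplicity of the trivial $\SO{m}$-representation $W_{(0)}$ in the restriction of $V_\lambda$ to $\SO{m}$, and the branching rules for $\SO{i} \downarrow \SO{i-1}$ are multiplicity-free, so iterating them produces a decomposition indexed precisely by chains in the Bratteli poset.

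\textbf{Base case.} When $n = m$, the space $[V_\lambda]^{\SO{n}}$ is $V_\lambda$ if $\lambda = (0)$ and $0$ otherwise, so its dimension equals $1$ if $\lambda = (0)$ and $0$ otherwise. On the other side, the only chain from $(0)^{\SO{m}}$ to $\lambda^{\SO{m}}$ in $\calB(m,m)$ is the trivial length-one chain, and it exists if and only if $\lambda = (0)$. The two quantities agree.

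\textbf{Inductive step.} Assume the statement holds for all pairs $(m,n')$ with $n' < n$. By Lemma \ref{lemma:branchingRules}, restricting $V_\lambda$ from $\SO{n}$ to $\SO{n-1}$ yields the multiplicity-free decomposition
\[
V_\lambda \big|_{\SO{n-1}} \;=\; \bigoplus_{\mu} W_\mu,
\]
where $\mu$ ranges over the dominant integral weights of $\SO{n-1}$ interlacing $\lambda$; equivalently, these are exactly the weights with $\mu^{\SO{n-1}} \preceq \lambda^{\SO{n}}$ in $\calB(m,n)$. Taking $\SO{m}$-invariants (which commutes with the direct sum) gives
\[
[V_\lambda]^{\SO{m}} \;=\; \bigoplus_{\mu^{\SO{n-1}} \preceq \lambda^{\SO{n}}} [W_\mu]^{\SO{m}}.
\]
Applying the inductive hypothesis to each summand, $\dim [W_\mu]^{\SO{m}}$ equals the number of chains in $\calB(m,n-1)$ from $(0)^{\SO{m}}$ to $\mu^{\SO{n-1}}$. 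Summing over all eligible $\mu$ and prepending the step $\mu^{\SO{n-1}} \preceq \lambda^{\SO{n}}$ establishes a bijection between the resulting chains and chains in $\calB(m,n)$ from $(0)^{\SO{m}}$ to $\lambda^{\SO{n}}$, completing the induction.

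\textbf{Self-identification under duality.} Since Lemma \ref{lemma:branchingRules} is stated for $V_\lambda$ rather than $V_\lambda^*$, one may want to apply it to $V_\lambda^*$; but as already noted just before Lemma \ref{lemma:branchingRules}, $V_\lambda \simeq V_\lambda^*$ as $\SO{n}$-representations via contraction with the quadratic form, so the argument applies equally well to either space and the statement may be phrased in either way.

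\textbf{Expected obstacle.} The only subtle point is verifying that the Bratteli poset relation $\mu^{\SO{n-1}} \preceq \lambda^{\SO{n}}$ coincides with the interlacing condition appearing in the branching rules; this is a matter of unwinding the definition of $\calB$, using that the branching is multiplicity-free (so ``appears in the decomposition'' is equivalent to the interlacing inequalities). Once this identification is made, the induction is straightforward.
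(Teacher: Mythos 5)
Your proof is correct and takes essentially the same route as the paper: the paper's own argument also rests on the multiplicity-free branching from $\SO{i}$ to $\SO{i-1}$, which it invokes in one sentence, whereas you spell out the resulting induction on $n-m$ explicitly. No issues.
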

\begin{proof}
 This is a direct consequence of the branching rules described in Lemma \ref{lemma:branchingRules}. Indeed the restriction of an irreducible representation from $\SO{n}$ to $\SO{n-1}$ is multiplicity free, implying that every chain from $(0)^{\SO{m}}$ to $\lambda^{\SO{n}}$ gives a unique invariant and all these invariants are linearly independent.
\end{proof}

A useful combinatorial picture for recording chains in the Bratteli poset $\calB(m,n)$ is a Gelfand--Tsetlin pattern of shape $(\SO{m},\SO{n})$. This is a diagram of boxes placed in $n-m+1$ rows, indexed by integers $m \vvirg n$. The number of boxes in the $i$-th row equals the rank of $\SO{i}$ and the left border of the diagram is an overlapping descending staircase. The boxes are labeled by the integer coefficients of a dominant weight in terms of the simple weights and these labels interlace along each row according to the branching rules. More precisely, the labels have to satisfy the inequalities:

\setlength{\unitlength}{30pt}
\begin{align}\label{eq: GTinequalities1}
\begin{picture}(6,1.5)
\put(0,1){\ybox}
\put(1,1){\ybox}
\put(0.5,0){\ybox}
\put(-0.5,0){\ylabel{$\cdots$}}
\put(-1,1){\ylabel{$\cdots$}}
\put(2,1){\ylabel{$\cdots$}}
\put(1.5,0){\ylabel{$\cdots$}}
\put(0,1){\ylabel{\small $\mu_{i,j}$}}
\put(1,1){\ylabel{\small $\mu_{i+1,j}$}}
\put(0.5,0){\ylabel{\small $\mu_{i,j+1}$}}
\put(6,0.5){\ylabel{$ \mu_{i,j} \geq \mu_{i,j+1} \geq \mu_{i+1,j}$}}
\put(3,0.5){\ylabel{$\iff$}}
\end{picture} \\
\notag ~\\ 
\label{eq: GTinequalities2}
\begin{picture}(6,1.5)
\put(0,1){\ybox}
\put(0.5,0){\ybox}
\put(-0.5,0){\ylabel{$\cdots$}}
\put(-1,1){\ylabel{$\cdots$}}
\put(0,1){\ylabel{\small $\mu_{i,j}$}}
\put(0.5,0){\ylabel{\small $\mu_{i,j+1}$}}
\put(6,0.5){\ylabel{$ \mu_{i,j} \geq \mu_{i,j+1} \geq -\mu_{i,j}$}}
\put(3,0.5){\ylabel{$\iff$}}
\end{picture}\\
\notag ~ \\
\label{eq: GTinequalities3}
\begin{picture}(6,1.5)
\put(0,1){\ybox}
\put(1,1){\ybox}
\put(0.5,0){\ybox}
\put(-0.5,0){\ylabel{$\cdots$}}
\put(-1,1){\ylabel{$\cdots$}}
\put(0,1){\ylabel{\small $\mu_{i,j}$}}
\put(1,1){\ylabel{\small $\mu_{i+1,j}$}}
\put(0.5,0){\ylabel{\small $\mu_{i,j+1}$}}
\put(6,0.5){\ylabel{$ \mu_{i,j} \geq \mu_{i,j+1} \geq |\mu_{i+1,j}|$}}
\put(3,0.5){\ylabel{$\iff$}}
\end{picture}
\end{align}

These inequalities ensure that a filling of the Gelfand--Tsetlin pattern corresponds to a chain in the Bratteli poset. Conversely, any chain in the Bratteli poset will correspond to a filling.  

In Figure \ref{fig:47GTpattern}, we give an example of a Gelfand--Tsetlin pattern of shape $(\SO{3}, \SO{7})$. Notice that the zero in the row corresponding to $\SO{4}$ is forced by the third inequality in \eqref{eq: GTinequalities1}--\eqref{eq: GTinequalities3}.

\setlength{\unitlength}{15pt}
\begin{figure}[!htbp]
\begin{center}
\begin{picture}(4,6)
\put(0,4){\ylabel{6}}
\put(1,4){\ylabel{2}}
\put(2,4){\ylabel{2}}
\put(0.5,3){\ylabel{5}}
\put(1.5,3){\ylabel{2}}
\put(2.5,3){\ylabel{-1}}
\put(1,2){\ylabel{5}}
\put(2,2){\ylabel{1}}
\put(1.5,1){\ylabel{4}}
\put(2.5,1){\ylabel{0}}
\put(2,0){\ylabel{0}}

\put(5,4){\ylabel{$\SO{7}$}}
\put(5,3){\ylabel{$\SO{6}$}}
\put(5,2){\ylabel{$\SO{5}$}}
\put(5,1){\ylabel{$\SO{4}$}}
\put(5,0){\ylabel{$\SO{3}$}}

\put(0,4){\ybox}
\put(1,4){\ybox}
\put(2,4){\ybox}
\put(0.5,3){\ybox}
\put(1.5,3){\ybox}
\put(2.5,3){\ybox}
\put(1,2){\ybox}
\put(2,2){\ybox}
\put(1.5,1){\ybox}
\put(2.5,1){\ybox}
\put(2,0){\ybox}
\end{picture}
\caption{A chain from $\lambda^{\SO{7}}=(6,2,2)$ to $(0)^{\SO{3}}$ in the Bratteli poset $\mathcal B(3,7)$ given by a Gelfand--Tsetlin pattern of shape $(\SO{3},\SO{7})$} \label{fig:47GTpattern}
\end{center}
\end{figure}

In general, the shape of a Gelfand--Tsetlin diagram depends on the parity of $n$ and $m$ because the row corresponding to $\SO{i}$ has $\lfloor{\frac{i}{2}}\rfloor$ boxes. For reference, in Figure \ref{fig:chainExample1}, we give the shape when $n = 2r+1$ and $m = 2r' -1$ are both odd, from the weight $(0)$ for $\SO{m}$ to the weight $\lambda$ for $\SO{n}$. 

\setlength{\unitlength}{42pt}
\begin{figure}[!htbp]
\begin{center}
\begin{displaymath}
\scalebox{.7}{
\begin{picture}(10,8)
\put(0,6){\ylabel{$\lambda_1$}}
\put(1,6){\ylabel{$\lambda_2$}}
\put(2,6){\ylabel{$\lambda_3$}}
\put(3,6){\ylabel{$\cdots$}}
\put(4,6){\ylabel{$\cdots$}}
\put(5,6){\ylabel{$\lambda_{r-1}$}}
\put(6,6){\ylabel{$\lambda_r$}}
\put(0.5,5){\ylabel{$\mu_{1,1}$}}
\put(1.5,5){\ylabel{$\mu_{1,2}$}}
\put(2.5,5){\ylabel{$\mu_{1,3}$}}
\put(3.5,5){\ylabel{$\cdots$}}
\put(4.5,5){\ylabel{$\cdots$}}
\put(5.5,5){\ylabel{$\mu_{1,r-1}$}}
\put(6.5,5){\ylabel{$\mu_{1,r}$}}
\put(1,4){\ylabel{$\mu_{2,1}$}}
\put(2,4){\ylabel{$\mu_{2,2}$}}
\put(3,4){\ylabel{$\cdots$}}
\put(4,4){\ylabel{$\cdots$}}
\put(5,4){\ylabel{$\mu_{2,r-2}$}}
\put(6,4){\ylabel{$\mu_{2,r-1}$}}
\put(2,3){\ylabel{$\ddots$}}
\put(6,3){\ylabel{$\vdots$}}

\put(2,2){\ylabel{{\small $\mu_{k-2,1}$}}}
\put(3,2){\ylabel{$\mu_{k-2,2}$}}
\put(4,2){\ylabel{$\cdots$}}
\put(5,2){\ylabel{{\small $\mu_{k-2, r'-1}$}}}
\put(6,2){\ylabel{{\small $\mu_{k-2,r'}$}}}

\put(2.5,1){\ylabel{{\small $\mu_{k-1,1}$}}}
\put(3.5,1){\ylabel{$\mu_{k-1,2}$}}
\put(4.5,1){\ylabel{$\cdots$}}
\put(5.5,1){\ylabel{{\small $\mu_{k-1, r'-1}$}}}
\put(6.5,1){\ylabel{{\small $\mu_{k-1,r'}$}}}

\put(3,0){\ylabel{$0$}}
\put(4,0){\ylabel{$\cdots$}}
\put(5,0){\ylabel{$0$}}
\put(6,0){\ylabel{$0$}}

\put(0,6){\ybox}
\put(1,6){\ybox}
\put(2,6){\ybox}
\put(3,6){\ylabel{$\cdots$}}
\put(4,6){\ylabel{$\cdots$}}
\put(5,6){\ybox}
\put(6,6){\ybox}
\put(0.5,5){\ybox}
\put(1.5,5){\ybox}
\put(2.5,5){\ybox}
\put(3.5,5){\ylabel{$\cdots$}}
\put(4.5,5){\ylabel{$\cdots$}}
\put(5.5,5){\ybox}
\put(6.5,5){\ybox}
\put(1,4){\ybox}
\put(2,4){\ybox}
\put(3,4){\ylabel{$\cdots$}}
\put(4,4){\ylabel{$\cdots$}}
\put(5,4){\ybox}
\put(6,4){\ybox}
\put(2,3){\ylabel{$\ddots$}}
\put(6,3){\ylabel{$\vdots$}}

\put(2,2){\ybox}
\put(3,2){\ybox}
\put(4,2){\ylabel{$\cdots$}}
\put(5,2){\ybox}
\put(6,2){\ybox}

\put(2.5,1){\ybox}
\put(3.5,1){\ybox}
\put(4.5,1){\ylabel{$\cdots$}}
\put(5.5,1){\ybox}
\put(6.5,1){\ybox}

\put(3,0){\ybox}
\put(4,0){\ylabel{$\cdots$}}
\put(5,0){\ybox}
\put(6,0){\ybox}

\put(8,6){\ylabels{$\SO{2r+1}$}}
\put(8,5){\ylabels{$\SO{2r}$}}
\put(8,4){\ylabels{$\SO{2r-1}$}}
\put(8,3){\ylabel{$\vdots$}}
\put(8,2){\ylabels{$\SO{2r'+1}$}}
\put(8,1){\ylabels{$\SO{2r'}$}}
\put(8,0){\ylabels{$\SO{2r'-1}$}}
\end{picture}
}
\end{displaymath}
\caption{Gelfand--Tsetlin pattern of shape $(\SO{m},\SO{n})$ with $n,m$ both odd}
\label{fig:chainExample1}
\end{center}
\end{figure}

\begin{definition}
Let $n = 2r$ or $n = 2r+1$ depending on the parity. Let $\lambda \in \bbR^r$ be an $r$-tuple $\lambda = (\lambda_1 \vvirg \lambda_r)$ with $\lambda_1 \geq \cdots \geq \lambda_{r} \geq 0$ if $n$ is odd and $\lambda_1 \geq \cdots \geq \lambda_{r-1} \geq |\lambda_r|$ if $n$ is even. The {Gelfand--Tsetlin polytope} $GT^{\SO{n}}_{\SO{m}}(\lambda)$ is the set of all fillings of the Gelfand--Tsetlin pattern of shape $(\SO{m},\SO{n})$ with $\lambda$ in the top row, $(0)$ in the bottom row and filled by real numbers subject to the inequalities of \eqref{eq: GTinequalities1}--\eqref{eq: GTinequalities3}. 
\end{definition}

When $\lambda$ is a dominant integral weight for $\SO{n}$, then the integral points of the Gelfand--Tsetlin polytope $GT^{\SO{n}}_{\SO{m}}(\lambda)$ correspond to chains in the Bratteli poset and therefore via Lemma \ref{lemma: invariants from Brattelli} to $\SO{m}$-invariants in the $\SO{n}$-representation $V_\lambda$.

We establish the dimension of these polytopes in the range of interest.
\begin{lemma}\label{lemma: dimension GT polys}
 Fix $n \leq 2k-1$ with $n = 2r$ or $n =2r+1$ depending on the parity. Let $\lambda \in \bbR^{r}$ have distinct coefficients and let $GT^{\SO{n}}_{\SO{n-k}}(\lambda)$ be the corresponding Gelfand--Tsetlin polytope. Then
 \begin{align*}
 \dim GT^{\SO{n}}_{\SO{n-k}}(\lambda) = \left\{ \begin{array}{ll}
                                                 r(2k-r) - \binom{k+1}{2} & \text{if $n = 2r$ is even}; \\
                                                 r(2k-r-1) - \binom{k}{2} & \text{if $n = 2r+1$ is odd}.
                                                \end{array}\right.
 \end{align*}
\end{lemma}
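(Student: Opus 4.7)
The plan is to identify the affine hull of the polytope inside the ambient vector space $\bbR^D$ of all fillings of the pattern, where $D=\sum_{i=m+1}^{n-1}\lfloor i/2\rfloor$ is the total number of boxes in the intermediate rows (the top row being fixed to $\lambda$ and the bottom row to $0$). The dimension of the polytope equals $D$ minus the number of independent linear equations that hold throughout it.

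The central claim is that on the entire polytope, the $\ell$th entry of row $\SO{m+j}$ vanishes whenever $\ell>j$. I would prove this by induction on $j$: the base $j=0$ is the hypothesis on the bottom row. For the inductive step, assume the claim for row $\SO{m+j}$. The interlacing inequalities \eqref{eq: GTinequalities1}--\eqref{eq: GTinequalities3} between rows $\SO{m+j}$ and $\SO{m+j+1}$ then squeeze every entry of row $\SO{m+j+1}$ at index $\ell>j+1$ between two zeros: the lower bound comes from the chain inequality with $b_\ell=0$, the upper bound from the chain inequality with $b_{\ell-1}=0$, and the rightmost entry is handled via the absolute-value constraint in \eqref{eq: GTinequalities3}. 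The argument requires splitting according to the parity of $m+j$ (which controls which of the three inequality types is in force), but the conclusion is the same in both cases.

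To rule out further equations in the affine hull, I would construct an interior point of the polytope by sweeping top-down from $\lambda$. Because $\lambda$ has distinct coefficients, the initial interlacing intervals $[\lambda_{i+1},\lambda_i]$ (and $[-\lambda_r,\lambda_r]$ for the last entry in the even case) are non-degenerate; at each subsequent row one selects every unforced entry strictly inside its allowed interval and sets the forced entries to zero. Since the chosen values of the current row are strictly interior, the intervals produced for the next row remain non-degenerate, and the inductive construction continues all the way down to row $\SO{m+1}$, yielding a point of the polytope at which the only tight constraints are the zero equations from the previous step. This shows that the affine hull is cut out exactly by those equations.

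Counting the unforced entries gives $\min(j,\lfloor(m+j)/2\rfloor)$ in row $\SO{m+j}$. The hypothesis $n\le 2k-1$ is equivalent to $m<k$, so the minimum equals $j$ for $1\le j\le m$ and equals $\lfloor(m+j)/2\rfloor$ for $m<j\le k-1$. Consequently
\[
\dim GT^{\SO{n}}_{\SO{m}}(\lambda)=\binom{m+1}{2}+\sum_{j=m+1}^{k-1}\left\lfloor\frac{m+j}{2}\right\rfloor=\left\lfloor\frac{(n-1)^2}{4}\right\rfloor-\binom{n-k}{2},
\]
where the last equality uses the elementary identity $\sum_{i=1}^{N}\lfloor i/2\rfloor=\lfloor N^2/4\rfloor$. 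Substituting $n=2r$ or $n=2r+1$ and simplifying produces the two stated expressions $r(2k-r)-\binom{k+1}{2}$ and $r(2k-r-1)-\binom{k}{2}$. The principal obstacle in the plan is the inductive identification of the forced-zero triangle in the second paragraph; the parity bookkeeping and the asymmetric role of the rightmost entry in \eqref{eq: GTinequalities3} are where care is required, but once this is settled the remaining steps are a routine non-degeneracy argument and arithmetic.
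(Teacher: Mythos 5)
Your proposal is correct and follows essentially the same route as the paper: both identify the triangle of labels forced to vanish by the interlacing inequalities and equate the dimension with the count of the remaining free labels. The only difference is one of rigor — you make explicit the bottom-up induction for the forced zeros and the top-down interior-point construction showing no further affine relations hold (a step the paper leaves implicit, relying on the distinctness of the $\lambda_i$), and your closed-form count $\lfloor (n-1)^2/4\rfloor - \binom{n-k}{2}$ agrees with the paper's row-by-row tally.
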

\begin{proof}
The dimension of the Gelfand--Tsetlin polytope equals the number of labels of the Gelfand--Tsetlin pattern which are not forced to be $0$ by the inequalities \eqref{eq: GTinequalities1}--\eqref{eq: GTinequalities3}. Write $m = n-k$. Since $n \leq 2k-1$, we have $2m+1 \leq n$.

Suppose $m$ is odd, so $m +1$ is even and the row labeled $m$ of the Gelfand--Tsetlin pattern has $\frac{m+1}{2}$ boxes. From Figure \ref{fig:chainExample1}, observe that all but the first label in the second row from the bottom are forced to be $0$; similarly, all but the leftmost $i$ labels in the $(i+1)$-th row from the bottom are forced to be $0$ for $i = 1 \vvirg m$. This gives $1 + 2 + \cdots + m = \binom{m+1}{2}$ nonzero labels in the bottom $m + 1$ rows of the Gelfand--Tsetlin pattern: indeed, observe that the $(m+1)$-th row from the bottom corresponds to $\SO{2m}$ and all its labels are nonzero. Now consider the rows from $2m$ to $n$: the last row is fixed and its labels do not contribute to the dimension; the remaining $n - 2m -1$ rows contribute with a total of $2 \left[ \binom{r}{2} - \binom{m+1}{2} \right] + m$ labels if $n = 2r$ is even and $2 \left[ \binom{r}{2} - \binom{m+1}{2} \right] + m + r-1$ if $n = 2r+1$ is odd. Expanding the binomial coefficients, we obtain the result.
If $m$ is even, the calculation is similar.
\end{proof}

We point out that a result similar to Lemma \ref{lemma: dimension GT polys} holds in the range $n \geq 2k$, that is, when $\deg(\St{k}{n})$ equals the B\'ezout bound. However, in this case, the inequalities are more complicated and the statement is more involved. Although in principle one can compute $\deg(\St{k}{n})$ using this approach in the B\'ezout range, we prefer the geometric argument of Section \ref{section: bezout range} and do not provide additional details on the representation theoretic approach in these cases. 

We now characterize the degree of $\St{k}{n}$ in terms of volumes of Gelfand--Tsetlin polytopes, where \emph{volume} means the Euclidean volume in the real dimensional space given by Lemma \ref{lemma: dimension GT polys}.
\begin{theorem}\label{thm: degree as integral}
 Fix $k, n$ with $n \leq 2k-1$. Then
 \[
\deg(\St{k}{n}) = N! \int_{\calC \cap W} \vol\left(GT^{\SO{n}}_{\SO{n-k}}(\lambda)\right)\cdot \vol\left(GT^{\SO{n}}_{\SO{1}} (\lambda)\right) d\lambda,
 \]
 where $N = \dim \St{k}{n} = \binom{n}{2} - \binom{n-k}{2}$.
\end{theorem}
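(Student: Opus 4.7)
The plan is to extract $\deg(\St{k}{n})$ from formula \eqref{eq: deg as limit} by computing the leading behavior of $\dim \bbC[\St{k}{n}]_{\leq j}$ via the graded Peter--Weyl decomposition \eqref{eq: schur weyl stiefel graded}. The dimensions of the summands will be realized as counts of integer fillings of Gelfand--Tsetlin polytopes, and the resulting double sum will be recognized as a Riemann sum converging to the claimed integral.

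Starting from \eqref{eq: schur weyl stiefel graded} and the self-duality $V_\lambda \simeq V_\lambda^*$ for $\SO{n}$-representations noted before Lemma \ref{lemma:branchingRules}, I would write
\[
\dim \bbC[\St{k}{n}]_{\leq j} \;=\; \sum_{\lambda \in j\calC \cap \Lambda^{\SO{n}}_+} \dim V_\lambda \cdot \dim [V_\lambda]^{\SO{n-k}}.
\]
Lemma \ref{lemma: invariants from Brattelli} applied to the subgroup $\SO{n-k} \subset \SO{n}$ identifies $\dim [V_\lambda]^{\SO{n-k}}$ with the number of integer fillings of $GT^{\SO{n}}_{\SO{n-k}}(\lambda)$. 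Applying the same lemma to the trivial subgroup $\SO{1} \subset \SO{n}$, where every vector is invariant, identifies $\dim V_\lambda$ with the number of integer fillings of $GT^{\SO{n}}_{\SO{1}}(\lambda)$; this is the classical Gelfand--Tsetlin basis.

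Each polytope $GT^{\SO{n}}_{\SO{m}}(\lambda)$ is rational and scales linearly in $\lambda$, so standard Ehrhart theory yields, uniformly on compact subsets of the interior of $\calC \cap W$,
\[
\#\bigl(GT^{\SO{n}}_{\SO{m}}(j\mu)\cap \bbZ^{\bullet}\bigr) \;=\; j^{d(m,n)}\,\vol\bigl(GT^{\SO{n}}_{\SO{m}}(\mu)\bigr) \;+\; O(j^{d(m,n)-1}),
\]
where $d(m,n) = \dim GT^{\SO{n}}_{\SO{m}}(\lambda)$. Lemma \ref{lemma: dimension GT polys} supplies $d(n-k,n)$, while $d(1,n)$ is the number of positive roots of $\SO{n}$. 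A direct computation (using $N = kn - \binom{k+1}{2}$) gives the critical identity $d(1,n) + d(n-k,n) + r = N$, where $r$ is the rank of $\SO{n}$, equivalently $r = \dim(\calC \cap W)$.

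Setting $\mu = \lambda/j$ and using that $\Lambda^{\SO{n}}$ has covolume $1$ in $\Lambda_{\bbR}\simeq\bbR^r$ under the standard identification, the dimension identity yields
\[
\frac{1}{j^N}\dim \bbC[\St{k}{n}]_{\leq j} \;=\; \frac{1}{j^r}\sum_{\mu \in \calC \cap W \cap \frac{1}{j}\Lambda^{\SO{n}}_+} \vol\bigl(GT^{\SO{n}}_{\SO{n-k}}(\mu)\bigr)\,\vol\bigl(GT^{\SO{n}}_{\SO{1}}(\mu)\bigr) \;+\; o(1).
\]
The main term is a Riemann sum of mesh $1/j$ for a piecewise-polynomial integrand on the compact region $\calC \cap W$, hence converges to the stated integral; multiplying by $N!$ and applying \eqref{eq: deg as limit} finishes the proof. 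The main technical point is the uniformity of the Ehrhart remainder near the measure-zero locus of $\calC \cap W$ on which $GT^{\SO{n}}_{\SO{m}}(\mu)$ degenerates to a lower-dimensional polytope; this is controlled by standard uniform polynomial bounds for lattice-point counts of families of rational polytopes, and the overall strategy parallels the treatment of Kazarnovskii and Brion for moment polytopes of spherical varieties and of Derksen--Kraft for orbit coordinate rings (cf.\ \cite{Kaz:NewtonPoly,Brion:ImageApplMoment,Brion:IntroActionsAlgGps,DerkKraft:ConstructiveInvTheory}).
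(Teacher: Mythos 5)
Your proposal is correct and follows essentially the same route as the paper's own proof: start from \eqref{eq: deg as limit} and \eqref{eq: schur weyl stiefel graded}, identify $\dim V_\lambda$ and $\dim [V_\lambda]^{\SO{n-k}}$ with lattice-point counts of Gelfand--Tsetlin polytopes via Lemma \ref{lemma: invariants from Brattelli}, use the dimension identity from Lemma \ref{lemma: dimension GT polys} to balance the powers of $j$, and pass to the integral as a limit of Riemann sums. The only difference is that you make explicit the Ehrhart-type error estimates and their uniformity, which the paper leaves implicit.
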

\begin{proof}
From equation \eqref{eq: schur weyl stiefel graded}, via Lemma \ref{lemma: invariants from Brattelli}, 
\begin{equation}\label{eq: deg stiefel as summation}
 \deg(\St{k}{n}) = N! \cdot \lim_{j \to \infty} \frac{1}{j^N} \sum_{\lambda \in j\mathcal C \cap \Lambda^{\SO{n}}_+} \left(\dim V_\lambda\right) \cdot \left(\dim [V_\lambda]^{\SO{n-k}}\right).
\end{equation}
Now, $\dim [V_\lambda]^{\SO{n-k}}$ equals the number of lattice points in $GT^{\SO{n}}_{\SO{n-k}}(\lambda)$. Similarly, $\dim V_\lambda$ is the number of invariants for the trivial group $\SO{1} \subseteq \SO{n}$, therefore it equals the number of lattice points in $GT^{\SO{n}}_{\SO{1}}(\lambda)$.

Using Lemma \ref{lemma: dimension GT polys}, whenever $\lambda$ has distinct coefficients, we obtain 
\[
 N - \left[ \dim GT^{\SO{n}}_{\SO{1}}(\lambda) + \dim GT^{\SO{n}}_{\SO{n-k}}(\lambda) \right] = r.
\]
This allows us to rewrite \eqref{eq: deg stiefel as summation} as
\begin{align*}
 \deg(\St{k}{n}) = N! \cdot \lim_{j \to \infty}  \sum_{\lambda \in j\mathcal C \cap \Lambda^{\SO{n}}_+} \frac{\dim V_\lambda}{ j^{\dim GT^{\SO{n}}_{\SO{1}}(\lambda)} } \cdot \frac{\dim [V_\lambda]^{\SO{n-k}}}{j^{\dim GT^{\SO{n}}_{\SO{n-k}}(\lambda)}} \\
 = N! \cdot \lim_{j \to \infty} \sum_{\substack{ \lambda \in \mathcal C \cap \frac{1}{j}\Lambda^{\SO{n}}_+}} \frac{\dim V_{j\lambda}}{j^{\dim GT^{\SO{n}}_{\SO{1}}(\lambda)} } \cdot \frac{ \dim [V_{j\lambda}]^{\SO{n-k}}}{j^{\dim GT^{\SO{n}}_{\SO{n-k}}(\lambda)}}.
\end{align*}
As $j \to \infty$ this summation converges to an integral and the number of rescaled lattice points converges to the volume of the Gelfand--Tsetlin polytope. We conclude
\[
 \deg(\St{k}{n}) = N! \int_{\calC \cap W} \vol \left( GT^{\SO{n}}_{\SO{1}}(\lambda)  \right)\cdot \vol\left( GT^{\SO{n}}_{\SO{n-k}}(\lambda)\right) d\lambda.
\]
\end{proof}

The volumes of the Gelfand--Tsetlin polytopes can be computed via straightforward integrals, using their definitions via the inequalities \eqref{eq: GTinequalities1}--\eqref{eq: GTinequalities3} which explicitly determine the range of each variable:
\begin{align*}
\text{inequality \eqref{eq: GTinequalities1}} &\longleftrightarrow \int_{\mu_{i+1,j}}^{\mu_{i,j}} 1 d\mu_{i,j+1} ,\\ 
\text{inequality \eqref{eq: GTinequalities2}} &\longleftrightarrow \int_{-\mu_{i,j}}^{\mu_{i,j}} 1d\mu_{i,j+1}, \\
\text{inequality \eqref{eq: GTinequalities3}} &\longleftrightarrow \int_{|\mu_{i+1,j}|}^{\mu_{i,j}} 1d\mu_{i,j+1}. \\
\end{align*}
In fact, we perform an additional reduction: for the integral associated to \eqref{eq: GTinequalities2}, we have $\int_{-\mu_{i,j}}^{\mu_{i,j}} 1d\mu_{i,j+1} = 2\int_{0}^{\mu_{i,j+1}} 1 d\mu_{i,j+1}$. This allows us to assume that the rightmost label of every row of the Gelfand--Tsetlin pattern is nonnegative and simplifies the integral associated to \eqref{eq: GTinequalities3} as well, providing $ \int_{|\mu_{i+1,j}|}^{\mu_{i,j}} 1d\mu_{i,j+1} =  \int_{\mu_{i+1,j}}^{\mu_{i,j}} 1d\mu_{i,j+1}$.

After this simplification, the volume of the Gelfand--Tsetlin polytope is provided by a series of nested integrals, where one counts twice every integral whose integration variable is the label of the rightmost box of a row corresponding to $\SO{i}$ with $i$ even.

\begin{example}\label{ex:47}
Consider the general pattern of shape $(\SO{3},\SO{7})$:
\setlength{\unitlength}{25pt}
\begin{displaymath}
\begin{picture}(4,6)
\put(0,4){\ylabel{$\lambda_1$}}
\put(1,4){\ylabel{$\lambda_2$}}
\put(2,4){\ylabel{$\lambda_3$}}
\put(0.5,3){\ylabel{$\mu_{1,1}$}}
\put(1.5,3){\ylabel{$\mu_{1,2}$}}
\put(2.5,3){\ylabel{$\mu_{1,3}$}}
\put(1,2){\ylabel{$\mu_{2,1}$}}
\put(2,2){\ylabel{$\mu_{2,2}$}}
\put(1.5,1){\ylabel{$\mu_{3,1}$}}
\put(2.5,1){\ylabel{$\mu_{3,2}$}}
\put(2,0){\ylabel{0}}

\put(5,4){\ylabel{$\SO{7}$}}
\put(5,3){\ylabel{$\SO{6}$}}
\put(5,2){\ylabel{$\SO{5}$}}
\put(5,1){\ylabel{$\SO{4}$}}
\put(5,0){\ylabel{$\SO{3}$}}

\put(0,4){\ybox}
\put(1,4){\ybox}
\put(2,4){\ybox}
\put(0.5,3){\ybox}
\put(1.5,3){\ybox}
\put(2.5,3){\ybox}
\put(1,2){\ybox}
\put(2,2){\ybox}
\put(1.5,1){\ybox}
\put(2.5,1){\ybox}
\put(2,0){\ybox}

\end{picture}
\end{displaymath}

Note that the inequality (\ref{eq: GTinequalities3}) implies that $\mu_{3,2}=0$ and so there are only $6$ free variables. The volume of $GT^{\SO{7}}_{\SO{3}}(\lambda)$ is given by
\[
\int_{\lambda_2}^{\lambda_1}
\int_{\lambda_3}^{\lambda_2}
2\int_{0}^{\lambda_3}
\int_{\mu_{1,2}}^{\mu_{1,1}}
\int_{\mu_{1,3}}^{\mu_{1,2}}
\int_{\mu_{2,2}}^{\mu_{2,1}}
1
d\mu_{3,1}
d\mu_{2,2}
d\mu_{2,1}
d\mu_{1,3}
d\mu_{1,2}
d\mu_{1,1}
\]
which evaluates to
\[
\vol\left(GT^{\SO{7}}_{\SO{3}}(\lambda)\right)=\frac 1 6 (\lambda_1-\lambda_2)(\lambda_2-\lambda_3)(\lambda_1-\lambda_3)\lambda_1\lambda_2\lambda_3. 
\]
In particular, note the factor of $2$ arising in the integration with respect to $\mu_{1,3}$ between $0$ and $\lambda_3$.
We point out that this volume is an alternating function in the $\lambda_j$'s, evident from the outermost two integrals. Moreover, it is divisible by $\lambda_3$ (and thus $\lambda_1$ and $\lambda_2$ by the alternating property) evident from the third outermost integral.
\end{example}

\subsection{Alternating functions and volumes of Gelfand--Tsetlin polytopes}

In this section, we use an induction argument to determine the volumes of the Gelfand--Tsetlin polytopes relevant to the calculation of $\deg \St{k}{n}$.

In Example \ref{ex:47}, we saw that the volume of $GT^{\SO{7}}_{\SO{3}}(\lambda)$ is an alternating polynomial in $\lambda$. It is clear that this is a general fact, because of the last sequence of integrals in $\vol \left(GT^{\SO{7}}_{\SO{3}}(\lambda)\right)$.

We record some facts about alternating polynomials referring to \cite[Ch. I]{Macdon:SymmetricFunctions}. Given an integer partition $\mu = (\mu_1 \vvirg \mu_r)$, define the alternating polynomial
\[
a_\mu (\lambda_1 \vvirg \lambda_r) = \det \left[ \lambda_j^{\mu_i + r - i} \right] .
\]
We remark that our notation differs from the usual notation which uses the subscript $\mu + (r-1 \vvirg 1,0)$ instead of $\mu$ for the alternating polynomial $a_\mu$.

We record two useful results on integration of alternating functions. The first gives the result of the integral of a product of two alternating functions on the standard simplex.
\begin{lemma}
 \label{lemma: integral product alternating}
Let $\Delta_r$ be the convex hull of the origin and the standard $r-1$-simplex in $\bbR^r$. Let $\mu=(\mu_1,\ldots,\mu_r)$ and $\nu=(\nu_1,\ldots,\nu_r)$ be two partitions. Then 
\[
\int_{\Delta_r} a_\mu(\lambda)a_\nu(\lambda) d\lambda = \frac{r!}{(r^2+|\mu|+|\nu|)!} \det\left(\begin{bmatrix}
(\nu_i+\mu_j +2r- i-j)!
\end{bmatrix}_{i,j=1}^r\right) 
\]
where $|\mu| = \sum \mu_i$ and $|\nu| = \sum \nu_i$.
\end{lemma}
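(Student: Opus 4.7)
The strategy is to expand both alternating determinants into sums over permutations, use the $S_r$-symmetry of $\Delta_r$ to collapse a double sum into a single one, evaluate each resulting monomial integral using the Dirichlet formula on the simplex, and recognize the remaining permutation sum as a determinant.

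First I would set $\alpha_i = \mu_i + r - i$ and $\beta_i = \nu_i + r - i$, so that
\[
 a_\mu(\lambda) a_\nu(\lambda) = \sum_{\sigma, \tau \in S_r} \sgn(\sigma\tau) \prod_{i=1}^r \lambda_{\sigma(i)}^{\alpha_i} \lambda_{\tau(i)}^{\beta_i}.
\]
The simplex $\Delta_r$ is invariant under the coordinate permutation action of $S_r$, so the change of variables $\lambda \mapsto \lambda \circ \sigma$ turns the integral of each summand into $\int_{\Delta_r} \prod_i \lambda_i^{\alpha_i} \lambda_{\sigma^{-1}\tau(i)}^{\beta_i} d\lambda$. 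Setting $\rho = \sigma^{-1}\tau$ gives $\sgn(\sigma\tau)=\sgn(\rho)$, and for each fixed $\rho$ there are $r!$ pairs $(\sigma,\tau)$. This reduces the double sum to
\[
 \int_{\Delta_r} a_\mu(\lambda) a_\nu(\lambda)\, d\lambda = r!\sum_{\rho \in S_r} \sgn(\rho) \int_{\Delta_r} \prod_{j=1}^r \lambda_j^{\alpha_j + \beta_{\rho^{-1}(j)}}\, d\lambda.
\]

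Next I would apply the classical Dirichlet integral
\[
 \int_{\Delta_r} \prod_{j=1}^r \lambda_j^{c_j}\, d\lambda = \frac{\prod_j c_j!}{(r + \sum_j c_j)!},
\]
noting that $\sum_j (\alpha_j + \beta_{\rho^{-1}(j)}) = |\alpha| + |\beta| = |\mu| + |\nu| + 2\binom{r}{2} = |\mu|+|\nu|+r^2-r$. The denominator $(r + \sum_j c_j)! = (r^2 + |\mu| + |\nu|)!$ is independent of $\rho$, so it factors out. What remains is
\[
 \int_{\Delta_r} a_\mu(\lambda) a_\nu(\lambda)\, d\lambda = \frac{r!}{(r^2 + |\mu| + |\nu|)!} \sum_{\rho \in S_r} \sgn(\rho) \prod_{j=1}^r (\alpha_j + \beta_{\rho^{-1}(j)})!.
\]

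Finally, reindexing $\sigma = \rho^{-1}$ and recognizing the permutation sum as the Leibniz expansion of $\det\bigl[(\alpha_j + \beta_i)!\bigr]_{i,j=1}^r$, together with the observation that $\alpha_j + \beta_i = \mu_j + \nu_i + 2r - i - j$ and that transposition preserves the determinant, yields exactly the stated formula. I do not anticipate a serious obstacle: the entire argument hinges on the permutation bookkeeping, and the only nontrivial analytic input is the Dirichlet integral, which is standard. The one place to be careful is matching the indexing conventions on the matrix entries with those in the statement of the lemma.
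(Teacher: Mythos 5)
Your proposal is correct and follows essentially the same route as the paper's proof: expand both determinants over permutations, collapse the double sum to $r!$ times a single sum using the $S_r$-symmetry, evaluate the monomial integrals over the simplex via the Dirichlet formula, and reassemble the result as a determinant. The only cosmetic difference is that you collapse the double sum by a change of variables before integrating, whereas the paper integrates first and then observes that the integrated summand depends only on $\tau\sigma^{-1}$; the bookkeeping is identical.
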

\begin{proof}
The proof is an explicit calculation obtained by expanding the determinants defining $a_\mu(\lambda)$ and $a_\nu(\lambda)$. Given a permutation $\sigma$, write $(-1)^\sigma$ for its sign.
\begin{align*}
\int_{\Delta_r}a_\mu(\lambda)a_\nu(\lambda)d\lambda & = \int_{\Delta_r} \sum_{\sigma,\tau \in \mathfrak{S}_r} (-1)^{\sigma \circ \tau}  \prod_{i=1}^r \lambda_i^{\mu_{\sigma(i)} + \nu_{\tau(i)} + 2r - \sigma(i)-\tau(i)}d\lambda \\
 &=\sum_{\sigma,\tau \in \mathfrak{S}_r} (-1)^{\sigma\circ\tau} \left(\int_{\Delta_r} \prod_{i=1}^r\lambda_i^{\mu_{\sigma(i)}+\nu_{\tau(i)}+2r -\sigma(i)-\tau(i)} d \lambda\right).
 \end{align*}
 The integral of a monomial over a simplex is given by \cite[Lemma 4.23]{Milne:AlgebraicNumberTheory}. Applying this to our expression gives
 \begin{align*}
  & \sum_{\sigma,\tau \in \mathfrak{S}_r} (-1)^{\sigma\circ\tau} \frac{\prod_{i=1}^r(\mu_{\sigma(i)}+\nu_{\tau(i)}+2r-\sigma(i)-\tau(i))!}{(r+\left(\sum_{i=1}^r 2r \right)+ \mu_{\sigma(i)}+\nu_{\tau(i)}-\sigma(i)-\tau(i))!}\\
 =&\frac{1}{(r^2+|\mu|+|\nu|)!}  \sum_{\sigma,\tau \in \mathfrak{S}_r} (-1)^{\sigma\circ\tau} \prod_{i=1}^r(\mu_{\sigma(i)}+\nu_{\tau(i)}+2r-\sigma(i)-\tau(i))! 
 \end{align*}
 \begin{align*}
 =&\frac{r!}{(r^2+|\mu|+|\nu|)!}\sum_{\tau \in \mathfrak{S}_r}(-1)^{\tau}  \prod_{i=1}^r(\mu_{i}+\nu_{\tau(i)}+2r-i-\tau(i))! \\
=&\frac{r!}{(r^2+|\mu|+|\nu|)!} \det\left([(\mu_i+\nu_j+2r-i-j)!]\right)_{i,j=1}^r.
\end{align*}
\end{proof}

The second result provides a formula for the integral of alternating functions in terms of the integration bounds.
\begin{lemma}
\label{lemma: integral alternating}
Let $\pi$ be a partition $\pi = (\pi_1 \vvirg \pi_r)$. Then
\begin{align*}
 \int_{\lambda_2}^{\lambda_1} \cdots \int_{\lambda_{r+1}}^{\lambda_{r}} a_\pi (\mu_1 \vvirg \mu_r) d\mu_r \cdots d\mu_1 &= \frac{1}{\prod_1^r(\pi_j + r - j+1)} \cdot a_{(\pi ,0)} (\lambda_1 \vvirg \lambda_{r+1}). \\
\end{align*}
\end{lemma}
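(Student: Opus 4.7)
The plan is to evaluate the integral directly by exploiting the multilinearity of the determinantal expression for $a_\pi$. Setting $c_i = \pi_i + r - i$ so that $a_\pi(\mu_1,\ldots,\mu_r) = \det\bigl[\mu_j^{c_i}\bigr]_{i,j=1}^r$, I observe the crucial feature that the integration limits $\lambda_{j+1} \le \mu_j \le \lambda_j$ depend only on the $\lambda$'s and not on the other $\mu$'s. Consequently, the iterated integral decouples, and since $\mu_j$ appears only in the $j$-th column of the matrix $[\mu_j^{c_i}]$, I can push the integration sign inside the determinant column by column.

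Applying this, the $(i,j)$ entry of the resulting matrix is
\[
\int_{\lambda_{j+1}}^{\lambda_j} \mu_j^{c_i}\, d\mu_j \;=\; \frac{\lambda_j^{c_i+1} - \lambda_{j+1}^{c_i+1}}{c_i + 1}.
\]
Pulling the scalar $1/(c_i+1) = 1/(\pi_i + r - i + 1)$ out of row $i$ yields
\[
\text{LHS} \;=\; \frac{1}{\prod_{i=1}^r (\pi_i + r - i + 1)} \cdot \det\bigl[\lambda_j^{c_i+1} - \lambda_{j+1}^{c_i+1}\bigr]_{i,j=1}^r,
\]
which already produces the required prefactor in the statement.

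It remains to identify the $r\times r$ difference determinant with $a_{(\pi,0)}(\lambda_1,\ldots,\lambda_{r+1})$. I would write the latter as the $(r+1)\times(r+1)$ determinant whose top $r$ rows are $(\lambda_1^{c_i+1}, \ldots, \lambda_{r+1}^{c_i+1})$, and whose bottom row is $(1,\ldots,1)$ arising from the trailing zero in $(\pi,0)$ (since the exponent there is $(\pi,0)_{r+1} + (r+1) - (r+1) = 0$). Performing the column operations $C_j \mapsto C_j - C_{j+1}$ for $j=1,\ldots,r$ preserves the determinant, annihilates all entries in the bottom row except the rightmost $1$, and replaces columns $1,\ldots,r$ of the top block with exactly $[\lambda_j^{c_i+1} - \lambda_{j+1}^{c_i+1}]$. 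Laplace expansion along the bottom row then recovers the sought identity.

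No step presents a real obstacle: the multilinearity argument is standard, and the column-telescoping trick is exactly the kind of manipulation that produces Vandermonde-type identities. The main thing to be careful about is bookkeeping with the index shift $c_i \mapsto c_i + 1$, which corresponds precisely to viewing the partition $(\pi,0)$ as having length $r+1$ in the convention $a_\mu(\lambda) = \det[\lambda_j^{\mu_i + (\text{length}) - i}]$.
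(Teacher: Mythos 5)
Your proof is correct and follows essentially the same route as the paper: both integrate entrywise inside the determinant using multilinearity (each $\mu_j$ occupies a single column with $\lambda$-only limits), extract the factors $1/(\pi_i+r-i+1)$, and identify the resulting difference determinant with $a_{(\pi,0)}(\lambda_1,\ldots,\lambda_{r+1})$ via the bordered $(r+1)\times(r+1)$ determinant with a row of ones and telescoping column operations. The only difference is cosmetic — you run the column operations from the bordered determinant down to the difference determinant, while the paper builds up in the reverse direction.
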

\begin{proof}
Consider the determinant representation of $a_\pi(\mu)$ and notice that each variable appears only in a single column of the corresponding matrix. By linearity, this implies that the integration can be performed directly on the entries of the matrix:

\begin{align*}
 \int_{\lambda_2}^{\lambda_1} \cdots &\int_{\lambda_{r+1}}^{\lambda_{r}} a_\pi (\mu_1 \vvirg \mu_r) d\mu_r \cdots d\mu_1  \\
 &=\det \left[ \begin{array}{ccc} \int_{\lambda_2}^{\lambda_1} \mu_1^{\pi_1 + r-1} d\mu_1 & \cdots & \int_{\lambda_2}^{\lambda_1} \mu_r^{\pi_1+r-1} d\mu_r  \\ 
 \vdots & &  \vdots \\
 \int_{\lambda_2}^{\lambda_1} \mu_1^{\pi_r } d\mu_1  &\cdots & \int_{\lambda_2}^{\lambda_1} \mu_r^{\pi_r} d\mu_r   \end{array}
 \right]  \\
&= \frac{1}{\prod (\pi_j + r-j+1)} \det \left[ \begin{array}{ccc}
\lambda_1^{\pi_1 + r} -\lambda_2^{\pi_1 + r} &\cdots &  \lambda_{r}^{\pi_1 + r} - \lambda_{r+1}^{\pi_1 + r} \\ 
\vdots & &  \vdots \\
\lambda_1^{\pi_r+1} -\lambda_2^{\pi_r+1} &\cdots &  \lambda_{r}^{\pi_r+1} -\lambda_{r+1}^{\pi_r+1} \\ 
 \end{array}
 \right]  \\
&= \frac{1}{\prod (\pi_j + r-j+1)} \det \left[ \begin{array}{cccc}
\lambda_1^{\pi_1+ r } - \lambda_2^{\pi_1 + r} & \cdots & \lambda_{r}^{\pi_1 + r} - \lambda_{r+1}^{\pi_1 + r} & \lambda_{r+1}^{\pi_1 + r} \\ 
\vdots & &  \vdots & \vdots \\
\lambda_1^{\pi_r+1} - \lambda_2^{\pi_r+1} & \cdots & \lambda_{r}^{\pi_r+1} -\lambda_{r+1}^{\pi_r+1} & \lambda_{r+1}^{\pi_r+1} \\ 
0 &  \cdots & 0 & 1\end{array}
 \right]  \\
&= \frac{1}{\prod (\pi_j + r-j+1)} \det \left[ \begin{array}{cccc}
\lambda_1^{\pi_1+ r } & \lambda_2^{\pi_1 + r} & \cdots & \lambda_{r+1}^{\pi_1 + r}  \\ 
\vdots & \vdots & &  \vdots \\
\lambda_1^{\pi_r+1} &\lambda_2^{\pi_r+1} &\cdots & \lambda_{r+1}^{\pi_r+1}  \\ 
1 & 1 & \cdots & 1\end{array}
 \right]  \\ &=\frac{1}{\prod (\pi_j + r-j+1)} \cdot a_{(\pi,0)}(\lambda_1 \vvirg \lambda_{r+1}).
\end{align*}
\end{proof}

Define recursively the following partitions. Let $\Omega_{k , 2k-1} = \underbrace{(1 \vvirg 1)}_{k-1}$ and let 
\begin{equation}\label{def: omega}
 \Omega_{k,n} = \left\{ \begin{array}{ll} 
                         (\Omega_{k-1,n-1},0) & \text{if $n$ is even} \\
                         \Omega_{k-1,n-1} + (1 \vvirg 1) & \text{if $n$ is odd}.
                        \end{array}\right.
\end{equation}
A closed expression for $\Omega_{k,n}$ can be obtained by induction and it is given by
\begin{equation*}\label{def: omega closed}
 \Omega_{k,n} = \left\{  \begin{array}{ll} 
                         (\underbrace{k-r \vvirg k-r}_{n-k}, k-r-1 \vvirg 0) & \text{if $n = 2r$ is even} \\
                         (\underbrace{k-r \vvirg k-r}_{n-k}, k-r-1 \vvirg 1) & \text{if $n=2r+1$ is odd}.
                         \end{array}\right.
\end{equation*}

Notice that the recursion reaches all pairs $(k,n)$ with $n \leq 2k-1$. For reference, Table \ref{table: omega table} contains the first values of $\Omega_{k,n}$.

\begin{table}[!htpb]
\begin{center}
\begin{tabular}{|l|c|c|c|c|c|c|c|c|c|c|}
\hline
$k \backslash n$ & 1 & 2 & 3 & 4 & 5 & 6 &7 &8&9&10\\
\hline
1&\cellcolor{green} &\cellcolor{green!21}(0)&\cellcolor{green!21}&\cellcolor{green!21}&\cellcolor{green!21}& \cellcolor{green!21}&\cellcolor{green!21}&\cellcolor{green!21}&\cellcolor{green!21}&\cellcolor{green!21}
\\
\hline
2&*&\cellcolor{blue!50}&\cellcolor{green}(1)&\cellcolor{green!21}&\cellcolor{green!21}&\cellcolor{green!21}&\cellcolor{green!21}&\cellcolor{green!21}&\cellcolor{green!21}&\cellcolor{green!21}
\\
\hline
3&*&*&\cellcolor{blue!50}&(1,0)\cellcolor{blue!25}&\cellcolor{green}(1,1)&\cellcolor{green!21}&\cellcolor{green!21}&\cellcolor{green!21}&\cellcolor{green!21}&\cellcolor{green!21}
\\
\hline
4&*&*&*&\cellcolor{blue!50}&(2,1)\cellcolor{blue!25}&(1,1,0)\cellcolor{blue!25}&\cellcolor{green}(1,1,1)&\cellcolor{green!21}&\cellcolor{green!21}&\cellcolor{green!21}
\\
\hline
5&*&*&*&*&\cellcolor{blue!50}&(2,1,0)\cellcolor{blue!25}&(2,2,1)\cellcolor{blue!25}&(1,1,1,0)\cellcolor{blue!25}&(1,1,1,1)\cellcolor{green}&\cellcolor{green!21}
\\
\hline
6&*&*&*&*&*&\cellcolor{blue!50}&(3,2,1)\cellcolor{blue!25}&(2,2,1,0)\cellcolor{blue!25}&(2,2,2,1)\cellcolor{blue!25}&(1,1,1,1,0)\cellcolor{blue!25}
\\
\hline
7&*&*&*&*&*&*&\cellcolor{blue!50}&\cellcolor{blue!25}(3,2,1,0) &(3,3,2,1)\cellcolor{blue!25}&(2,2,2,1,0)\cellcolor{blue!25}
\\
\hline
8&*&*&*&*&*&*&*&\cellcolor{blue!50} &\cellcolor{blue!25}(4,3,2,1)&(3,3,2,1,0)\cellcolor{blue!25}
\\
\hline
9&*&*&*&*&*&*&*&*&\cellcolor{blue!50}&\cellcolor{blue!25}(4,3,2,1,0) 
\\
\hline
10&*&*&*&*&*&*&*&*&*&\cellcolor{blue!50}
\\
\hline
\end{tabular}
\caption{Partitions $\Omega_{k,n}$ from \eqref{def: omega}. The bases of the recursion are the dark green boxes; the recursive steps move south east.}\label{table: omega table}
\end{center}
\end{table}

\begin{proposition}\label{prop: volume GT 2k k}
The volume of $GT^{\SO{2k-1}}_{\SO{k-1}}(\lambda)$ is 
\[
\vol\left(GT^{\SO{2k-1}}_{\SO{k-1}}(\lambda)\right) = \frac{2}{\prod_{j=1}^{k-1}j!}a_{\Omega_{k,2k-1}}(\lambda) .
\]
\end{proposition}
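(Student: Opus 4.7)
The plan is to evaluate the volume by a direct iterated integral over the Gelfand--Tsetlin polytope, integrated bottom-up, with Lemma \ref{lemma: integral alternating} applied at each layer. First I describe the polytope: since the bottom row of the pattern is zero, an inductive application of the interlacing inequalities \eqref{eq: GTinequalities1}--\eqref{eq: GTinequalities3} forces a triangular wedge of zeros in the lower-right of the pattern, so that in the $s$-th row from the top only the $k-s$ leftmost entries $\nu^{(s)}_1,\dots,\nu^{(s)}_{k-s}$ remain free, for $s=1,\dots,k-1$ (and $\sum_{s=1}^{k-1}(k-s)=\binom{k}{2}$ matches Lemma \ref{lemma: dimension GT polys}). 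After the forced zeros are removed, the only remaining absolute-value constraints involving free variables are $|\nu^{(1)}_{k-1}|\le\lambda_{k-1}$ and $\nu^{(2)}_{k-2}\ge|\nu^{(1)}_{k-1}|$, both arising from \eqref{eq: GTinequalities2}--\eqref{eq: GTinequalities3}. The polytope is symmetric under $\nu^{(1)}_{k-1}\mapsto-\nu^{(1)}_{k-1}$, so its volume equals twice the volume of its restriction to $\{\nu^{(1)}_{k-1}\ge 0\}$, on which all constraints become standard interlacing $\nu^{(s-1)}_{j+1}\le\nu^{(s)}_j\le\nu^{(s-1)}_j$ (with $\nu^{(0)}=\lambda$ and the convention $\lambda_k=0$).

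Next, I integrate from the innermost (bottom) layer to the outermost (top). I claim, by induction on $s$ from $s=k-1$ down to $s=1$, that after integrating out the variables of layers $k-1,\dots,s+1$, the remaining integrand in the layer-$s$ variables is
\[
F_s(\nu^{(s)}) \;=\; \frac{1}{\prod_{j=2}^{k-s-1} j!}\cdot a_{(0^{k-s})}(\nu^{(s)}_1,\dots,\nu^{(s)}_{k-s}),
\]
where $(0^{k-s})$ denotes the partition with $k-s$ zero parts and an empty product is taken to be $1$. The base case $s=k-1$ is trivial: no integration has occurred and $F_{k-1}\equiv 1=a_{(0)}$. The inductive step---passing from $F_{s+1}$ to $F_s$ by integrating the $k-s-1$ free variables of layer $s+1$---is a single application of Lemma \ref{lemma: integral alternating} with $\pi=(0^{k-s-1})$ and $r=k-s-1$: the lemma supplies the factor $1/(k-s-1)!$ and promotes $a_{(0^{k-s-1})}$ to $a_{(0^{k-s})}$, matching the formula for $F_s$.

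For the outermost layer $s=1$, the $k-1$ variables integrate with bounds $[\lambda_{j+1},\lambda_j]$ for $j<k-1$ and $[0,\lambda_{k-1}]$ for $j=k-1$; this is exactly Lemma \ref{lemma: integral alternating} with $\pi=(0^{k-1})$, $r=k-1$, applied under the formal substitution $\lambda_k=0$, which outputs $\tfrac{1}{(k-1)!}\,a_{(0^k)}(\lambda_1,\dots,\lambda_{k-1},0)$. Expanding the defining $k\times k$ determinant along its last column (whose only nonzero entry is a single $1$ in position $(k,k)$) gives $a_{(0^k)}(\lambda_1,\dots,\lambda_{k-1},0)=a_{(1^{k-1})}(\lambda)=a_{\Omega_{k,2k-1}}(\lambda)$. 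Combining the symmetry factor $2$, the accumulated inductive coefficient $\bigl(\prod_{j=2}^{k-2}j!\bigr)^{-1}$ at $s=1$, and the last-layer factor $1/(k-1)!$ produces the claimed $\tfrac{2}{\prod_{j=1}^{k-1}j!}\,a_{\Omega_{k,2k-1}}(\lambda)$. The main technical obstacle is the combinatorial setup of the first paragraph---correctly tracking the cascading forced zeros (whose precise shape varies with the parity of $k$) and recognizing that after the symmetry reduction the single nonstandard bound $\nu^{(1)}_{k-1}\ge 0$ is exactly what Lemma \ref{lemma: integral alternating} computes under $\lambda_k=0$; the remainder of the argument is a mechanical cascade.
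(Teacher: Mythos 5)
Your proof is correct and is essentially the paper's argument: both reduce the polytope to a triangular nested integral after extracting a factor of $2$ from the single genuine absolute-value constraint on the last entry of the $\SO{2k-2}$ row, evaluate it by iterated application of Lemma \ref{lemma: integral alternating}, and finish by specializing $\lambda_k=0$ to turn $a_{(0^k)}$ into $a_{(1^{k-1})}=a_{\Omega_{k,2k-1}}$. The only difference is presentational: the paper packages the cascade as a recursion on an auxiliary triangular polytope $T_\ell$ (peeling the top row and noting a priori that its volume must be a multiple of the Vandermonde), whereas you unroll the same iterated integral bottom-up.
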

\begin{proof}

Let $n=2k-1$. As in the proof of Lemma \ref{lemma: dimension GT polys}, observe that only some of the labels on the Gelfand--Tsetlin pattern can be nonzero: only the $i$ leftmost labels in the row corresponding to $\SO{k-1+i}$ are nonzero, for $i = 1 \vvirg 2k-2$. In particular, the row corresponding to $\SO{2k-2}$ has no labels identically equal to $0$. This shows 
\begin{equation}\label{eq: vol GT 2k k}
\vol\left(GT^{\SO{2k-1}}_{\SO{k-1}}(\lambda)\right) =  \int_{\lambda_{\lambda_2}}^{\lambda_{1}} \cdots\int_{\lambda_{k-1}}^{\lambda_{k-2}} 2\int_{0}^{\lambda_{k-1}} \vol(T_{k-1}(\mu_1 \vvirg \mu_{k-1})) d\mu_{k-1} \cdots d\mu_1,
\end{equation}
where $T_{\ell}(\mu_1 \vvirg \mu_{\ell})$ is the polytope defined by the same inequalities as in \eqref{eq: GTinequalities1}--\eqref{eq: GTinequalities3} and the triangular shape

\setlength{\unitlength}{25pt}
\begin{displaymath}
\begin{picture}(6,5)
\put(0,4){\ylabel{$\mu_1$}}
\put(1,4){\ylabel{$\mu_2$}}
\put(3,4){\ylabel{$\cdots$}}
\put(5,4){\ylabel{$\mu_{\ell-1}$}}
\put(6,4){\ylabel{$\mu_{\ell}$}}
\put(1.5,2){\ylabel{$\ddots$}}
\put(4.5,2){\ylabel{$\iddots$}}

\put(0,4){\ybox}
\put(1,4){\ybox}
\put(5,4){\ybox}
\put(6,4){\ybox}
\put(0.5,3){\ybox}
\put(1.5,3){\ybox}
\put(4.5,3){\ybox}
\put(5.5,3){\ybox}
\put(2.5,1){\ybox}
\put(3.5,1){\ybox}
\put(3,0){\ybox}

\end{picture}
\end{displaymath}

Observe that $\dim T_{\ell}(\mu_1 \vvirg \mu_{\ell}) = \binom{\ell}{2}$ and its volume is an alternating polynomial in the variables $\mu_1 \vvirg \mu_{\ell}$. There is a unique, up to scale, alternating polynomial of degree $\binom{\ell}{2}$ in $\ell$ variables and it is the Vandermonde determinant. Therefore, 
\[
\vol (T_{\ell}(\mu_1 \vvirg \mu_{\ell}) ) = \kappa_\ell a_{(0 \vvirg 0)}(\mu). 
\]
for some constant $\kappa_\ell$. We use induction on $\ell$ to determine $\kappa_\ell = \frac{1}{\prod_{j=1}^{\ell-1} j!}$. This holds when $\ell = 2$.

For $\ell \geq 3$, notice 
\begin{align*}
\vol (T_{\ell}(\mu_1 \vvirg \mu_{\ell}) )  &= \int_{\mu_2}^{\mu_1} \cdots \int_{\mu_{\ell}}^{\mu_\ell} \vol (T_{\ell-1}(\nu_1 \vvirg \nu_{\ell-1}  ) ) d\nu_{\ell-1} \cdots d \nu_1 \\ 
&= \kappa_{\ell-1} \int_{\mu_2}^{\mu_1} \cdots \int_{\mu_{\ell}}^{\mu_{\ell-1}} a_{(0 \vvirg 0)} (\nu_1  \vvirg \nu_{\ell-1})  d\nu_{\ell-1} \cdots d \nu_1 \\
&= \frac{1}{\prod_{j=1}^{\ell-2} j!} \cdot \frac{1}{\prod_1^{\ell-1} (\ell-j)} a_{(0\vvirg 0,0)}(\mu_1 \vvirg \mu_{\ell}),
\end{align*}
where in the last line we used Lemma \ref{lemma: integral alternating}; since $\prod_1^{\ell-1} (\ell-j) = (\ell-1)!$, we obtain the desired value of $\kappa_\ell$.

It remains to evaluate the integral in \eqref{eq: vol GT 2k k}. From \eqref{eq: vol GT 2k k}, we see 
\[
\vol\left(GT^{\SO{2k-1}}_{\SO{k-1}}(\lambda)\right) = 2 \cdot \vol (T_{k} (\lambda_1 \vvirg \lambda_{k-1},0)).
\]
This concludes the proof because 
\begin{align*}
2 \cdot \vol (T_{k} (\lambda_1 \vvirg \lambda_{k-1},0))&=2\left(\frac{1}{\prod_1^{k-1}j!}a_{(0,\ldots,0)}(\lambda_1,\ldots,\lambda_{k-1},0)\right)\\
&=\frac{2}{\prod_1^{k-1}j!}\lambda_1\cdots\lambda_{k-1}a_{(0,\ldots,0)}(\lambda_1,\ldots,\lambda_{k-1}) \\
&= \frac{2}{\prod_1^{k-1}j!} a_{(1,\ldots,1)}(\lambda_1,\ldots,\lambda_{k-1}) \\
&= \frac{2}{\prod_1^{k-1}j!} a_{\Omega_{k,2k-1}}(\lambda_1,\ldots,\lambda_{k-1}) .
\end{align*}

\end{proof}

Proposition \ref{prop: volume GT 2k k} provides the base of the induction for the following result.
\begin{theorem}\label{thm: volume GT general}
 Let $n \leq 2k-1$ with $n = 2r$ or $n = 2r+1$ depending on its parity and let $\lambda =(\lambda_1 \vvirg \lambda_r)$. Then
 \begin{align*}
  \vol \left( GT^{\SO{n}}_{\SO{n-k}} (\lambda) \right)&= \frac{2^{k-r}}{\prod_{j=1}^{r} ((\Omega_{k,n})_j +r-j)!} \cdot a_{\Omega_{k,n}} (\lambda) \\
&=           \frac{2^{k-r}}{\prod_1^{n-k} (k-j)! \cdot \prod_{n-k+1}^r (n-2j)!} \cdot a_{ \Omega_{k,n}}(\lambda).
    \end{align*}
\end{theorem}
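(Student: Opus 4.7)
The plan is to proceed by induction on $k$, moving diagonally south-east in Table \ref{table: omega table} so that $n - k$ remains fixed. The base cases along the diagonal $n = 2k - 1$ are provided by Proposition \ref{prop: volume GT 2k k}, and the inductive step takes a valid pair $(k, n)$ to $(k+1, n+1)$; since $n - k$ is preserved, the stabilizing subgroup $\SO{n-k}$ does not change. Every pair $(k, n)$ with $n \leq 2k - 1$ is reached this way. The second displayed expression follows from the first by substituting the closed form of $\Omega_{k, n}$ recorded just after \eqref{def: omega}, so only the first needs to be established.

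For the inductive step, write $(K, N) = (k+1, n+1)$ with $R = \lfloor N/2 \rfloor$. Performing the topmost integration first (over the row of the Gelfand--Tsetlin pattern labeled by $\SO{N-1}$) gives
\[
\vol\bigl(GT^{\SO{N}}_{\SO{N-K}}(\lambda)\bigr) = \int \vol\bigl(GT^{\SO{N-1}}_{\SO{N-K}}(\mu)\bigr) \, d\mu,
\]
where the outer bounds come from the branching inequalities \eqref{eq: GTinequalities1}--\eqref{eq: GTinequalities3} and the integrand is given by the inductive hypothesis at $(K-1, N-1) = (k, n)$. The argument then splits into two cases on the parity of $N$.

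If $N = 2R$ is even, then $N-1 = 2R-1$ is odd, and the row of $\SO{N-1}$ contributes $R - 1$ variables with nested bounds $\mu_j \in [\lambda_{j+1}, \lambda_j]$ (using $\lambda_R \geq 0$ from the standard reduction). Applying Lemma \ref{lemma: integral alternating} with $r = R - 1$ and $\pi = \Omega_{K-1, N-1}$ yields the alternating polynomial $a_{(\pi, 0)}(\lambda_1, \ldots, \lambda_R)$, which equals $a_{\Omega_{K,N}}(\lambda)$ by the $N$-even branch of \eqref{def: omega}. The factorial denominators collapse via $m! \cdot (m+1) = (m+1)!$ applied entrywise, and the entry $(\Omega_{K,N})_R = 0$ supplies the trivial factor $0! = 1$ needed to extend the product to $j = 1, \ldots, R$.

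If $N = 2R+1$ is odd, then $N-1 = 2R$ is even, and the row of $\SO{N-1}$ contributes $R$ variables with $\mu_j \in [\lambda_{j+1}, \lambda_j]$ for $j < R$ and $\mu_R \in [-\lambda_R, \lambda_R]$ via inequality \eqref{eq: GTinequalities2}. Because $GT^{\SO{N-1}}_{\SO{N-K}}(\mu)$ depends on $\mu_R$ only through $|\mu_R|$, its volume is even in $\mu_R$, and the last integration reduces to $2\int_0^{\lambda_R}$. On the domain $\mu_R \geq 0$ the inductive hypothesis applies, and Lemma \ref{lemma: integral alternating} with $r = R$ and the formal substitution $\lambda_{R+1} = 0$ produces $a_{(\pi, 0)}(\lambda_1, \ldots, \lambda_R, 0)$. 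Laplace expansion along the last column (whose only nonzero entry is the bottom $0^0 = 1$) collapses this determinant to an $R \times R$ minor, from which a factor of $\lambda_j$ may be pulled out of each column to identify the result with $a_{\pi + (1, \ldots, 1)}(\lambda) = a_{\Omega_{K,N}}(\lambda)$ by the $N$-odd branch of \eqref{def: omega}. The factor of $2$ from the symmetry reduction promotes $2^{K-1-R}$ to $2^{K-R}$, and the factorials again combine via $m! \cdot (m+1) = (m+1)!$.

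The main obstacle is the bookkeeping in Case B, where the symmetry reduction of $\int_{-\lambda_R}^{\lambda_R}$ to $2\int_0^{\lambda_R}$ must be applied at the level of the polytope volume rather than the polynomial formula (since $a_\pi(\mu)$ itself is generally not even in $\mu_R$), and where the Vandermonde-type identity $a_{(\pi, 0)}(\lambda, 0) = a_{\pi + (1, \ldots, 1)}(\lambda)$ must be verified explicitly. Beyond these two essentially algebraic points, everything reduces to matching factorial products using the recursion \eqref{def: omega}.
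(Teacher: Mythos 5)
Your proposal is correct and follows essentially the same route as the paper: induction along the diagonal $n-k = \mathrm{const}$ with base case Proposition \ref{prop: volume GT 2k k}, a parity split on the new $n$, integration over the second row via Lemma \ref{lemma: integral alternating}, and the identities $(\Omega_{k-1,n-1},0)=\Omega_{k,n}$ (even case) and $a_{(\pi,0)}(\lambda,0)=a_{\pi+(1,\ldots,1)}(\lambda)$ with the factor of $2$ from the symmetric bound (odd case). The two subtleties you flag are exactly the ones the paper addresses, the first by its earlier reduction allowing the rightmost label of each row to be assumed nonnegative.
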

\begin{proof}
Since $n \leq 2k-1$, there exists a nonnegative integer $p$ such that $(k,n) = (\ell +p, 2\ell -1 + p)$. We use induction on $p$. Notice that $n-k =\ell-1$ does not depend on $p$. When $p = 0$, the statement is true by Proposition \ref{prop: volume GT 2k k}. 

Notice that $\vol \left(GT^{\SO{n}}_{\SO{n-k}}(\lambda) \right)$ is obtained by integrating $\vol \left(GT^{\SO{n-1}}_{\SO{n-k}}(\lambda)\right)$ in the labels of the second (from the top) row of the Gelfand--Tsetlin pattern, see Figure \ref{fig:chainExample1}.

We consider two cases depending on the parity of $p$. 

\underline{Let $p$ be odd}. In this case $n = 2\ell-1 + p$ is even and if $\SO{n}$ has rank $r$ then $\SO{n-1}$ has rank $r-1$. We have
\begin{align*}
 \vol \left(GT^{\SO{n}}_{\SO{\ell-1}}(\lambda)\right) = \int_{\lambda_2}^{\lambda_1} \cdots \int_{\lambda_{r}}^{\lambda_{r-1}}  \vol \left(GT^{\SO{n-1}}_{\SO{\ell-1}}(\mu_1 \vvirg \mu_{r-1}) \right) d\mu_{r-1} \cdots d\mu_{1} \\
=\frac{2^{k-1-(r-1)}}{\prod_{j=1}^{r-1} ( (\Omega_{k-1,n-1})_j +r-1-j)!} \int_{\lambda_2}^{\lambda_1} \cdots \int_{\lambda_{r}}^{\lambda_{r-1}}  a_{\Omega_{k-1,n-1}} (\mu) \mu_{r-1} \cdots d\mu_1 
\end{align*}
where we use the inductive hypothesis for $p-1$ to compute $\vol \left(GT^{\SO{n-1}}_{\SO{\ell-1}}(\mu_1 \vvirg \mu_{r-1}) \right)$.

Applying Lemma \ref{lemma: integral alternating}, we obtain
\begin{align*}
  \vol \left(GT^{\SO{n}}_{\SO{\ell-1}}(\lambda) \right)= &\frac{2^{k-r}}{\prod_{j=1}^{r-1} ((\Omega_{k-1,n-1})_j! + (r-1)-j) } \\ &\cdot \frac{1}{\prod_1^{r-1} ( (\Omega_{k-1,n-1})_j + (r-1)-j+1 } a_{(\Omega_{k-1,n-1} ,0)} (\lambda).
\end{align*}
Since $n$ is even, we have $(\Omega_{k-1,n-1} ,0) = \Omega_{k,n}$ so 
\[
\prod_{j=1}^{r-1} ((\Omega_{k-1,n-1})_j + (r-1)-j)! \cdot \prod_1^{r-1} ( (\Omega_{k-1,n-1})_j + r-j) = \prod_1^{r} (\Omega_{k,n})_j + r -j)!.
 \]
This concludes the proof when $p$ is odd.

\underline{Let $p$ be even}. In this case $n = 2\ell -1 + p$ is odd, so $\SO{n}$ and $\SO{n-1}$ have rank $r$. We have
\begin{align*}
 \vol \left(GT^{\SO{n}}_{\SO{\ell-1}})(\lambda)\right) = \int_{\lambda_2}^{\lambda_1} \cdots \int_{\lambda_{r}}^{\lambda_{r-1}} 2\int_{0}^{\lambda_r}  \vol \left(GT^{\SO{n-1}}_{\SO{\ell-1}}(\mu_1 \vvirg \mu_{r}) \right) d\mu_{r} \cdots d\mu_{1}  \\
=\frac{2^{k-1-r)}}{\prod_{j=1}^r ( (\Omega_{k-1,n-1})_j +r-j)!} \int_{\lambda_2}^{\lambda_1} \cdots \int_{\lambda_{r}}^{\lambda_{r-1}} 2\int_{0}^{\lambda_r}  a_{\Omega_{k-1,n-1}} (\mu) \mu_{r} \cdots d\mu_1 
\end{align*}
Similarly to the proof of Proposition \ref{prop: volume GT 2k k}, we regard the last integration bound $0$ as a variable $\lambda_{r+1}$ and then evaluate it to $0$. By Lemma \ref{lemma: integral alternating}, we deduce
\begin{align*}
\vol \left(GT^{\SO{n}}_{\SO{\ell-1}}(\lambda) \right)=&  \frac{2^{k-1-r}}{\prod_{j=1}^r ( (\Omega_{k-1,n-1})_j +r-j)!} \\ &\cdot  \frac{1}{ \prod_1^r ((\Omega_{k-1,n-1})_j + r +1 - j)} \cdot  2 \cdot a_{ ( \Omega_{k-1,n-1},0)} (\lambda, \lambda_{r+1}) | _{\lambda_{r+1} = 0}.
\end{align*}
From properties of alternating functions $a_{ (\pi , 0) }(\lambda_1 \vvirg \lambda_{r+1}) |_{\lambda_{r+1} =0} = a_{ (\pi + (1 \vvirg 1))}(\lambda_1 \vvirg \lambda_r)$ for every partition $\pi$. Since $n$ is odd, we have $\Omega_{k,n} = \Omega_{k-1,n-1} + (1 \vvirg 1)$. This allows us to conclude:
\begin{align*}
 \vol \left(GT^{\SO{n}}_{\SO{\ell-1}}(\lambda) \right) = &\frac{2 \cdot 2^{k-1 - r}}{ \prod_1^r ((\Omega_{k,n})_j + r - j -1)!} \cdot \frac{1}{ \prod_1^r ((\Omega_{k,n})_j + r - j)} a_{ ( \Omega_{k-1,n-1} + (1 \vvirg 1))} (\lambda) \\
 = &\frac{2^{k- r}}{ \prod_1^r ((\Omega_{k,n})_j + r - j)!}a_{ \Omega_{k,n} } (\lambda).
\end{align*}
This concludes the proof for even $p$.

The second equality in the statement of the theorem is obtained by writing $\Omega_{k,n}$ explicitly in the denominator.
\end{proof}

We record separately the instance of Theorem \ref{thm: volume GT general} when $k = n-1$; by Lemma \ref{lemma: invariants from Brattelli} and the discussion after that, these are the Gelfand--Tsetlin polytopes controlling the dimension of irreducible $\SO{n}$-representations. Indeed, when $\lambda$ is a dominant integral weight for $\SO{n}$, the volume of $GT^{\SO{n}}_{\SO{1}}(\lambda)$ can be recovered directly from Weyl dimension formula, see e.g. \cite{DerkKraft:ConstructiveInvTheory}. 

\begin{corollary}\label{corol: volume GT SOn}
Let $n$ be a positive integer. Then 
 \[
  \vol \left(GT^{\SO{n}}_{\SO{1}}(\lambda)\right) = \left\{ \begin{array}{ll} 
                                                  \frac{2^{r - 1}}{\prod_1^r (2(r-j))!} a_{(r-1,r-2 \vvirg 0)} (\lambda) & \text{if $n = 2r$ is even} \\
                                                  \frac{2^{r }}{\prod_1^r (2(r-j)+1)!} a_{(r,r-1 \vvirg 1)} (\lambda) & \text{if $n = 2r+1$ is odd} \\
                                                 \end{array}\right.
 \]
\end{corollary}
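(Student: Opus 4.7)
The plan is to derive this corollary as a direct specialization of Theorem \ref{thm: volume GT general} to the case $k = n-1$, in which the subgroup $\SO{n-k} = \SO{1}$ is trivial and $GT^{\SO{n}}_{\SO{1}}(\lambda)$ records all fillings of the full Gelfand--Tsetlin pattern with top row $\lambda$. The main work is bookkeeping: rewriting $\Omega_{n-1,n}$ and the product in the denominator in the clean form stated.

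First I would compute $\Omega_{n-1,n}$ from the recursion \eqref{def: omega}. In the even case $n=2r$, the closed expression gives $\Omega_{n-1,n} = (k-r, k-r-1 \vvirg 0) = (r-1, r-2 \vvirg 0)$ since $n-k = 1$ contributes a single repeated entry $k-r = r-1$, which then fits seamlessly into the descending staircase. In the odd case $n = 2r+1$, the closed expression gives $\Omega_{n-1,n} = (r, r-1 \vvirg 1)$ by the same reasoning. This already matches the subscripts of $a_{(\cdot)}$ claimed in the two cases of the corollary.

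Next I would specialize the denominator $\prod_1^{n-k}(k-j)! \cdot \prod_{n-k+1}^r (n-2j)!$ from Theorem \ref{thm: volume GT general}. With $k = n-1$ and $n-k = 1$, the first product is just $(k-1)! = (n-2)!$; for $n = 2r$ this is $(2r-2)!$ and for $n = 2r+1$ this is $(2r-1)!$. The second product ranges over $j = 2 \vvirg r$ and gives $\prod_{j=2}^r (n-2j)!$, which in the even case equals $(2r-4)!(2r-6)!\cdots 0!$ and in the odd case equals $(2r-3)!(2r-5)!\cdots 1!$. Absorbing the first factor into the product, the combined denominators are exactly $\prod_{j=1}^r (2(r-j))!$ when $n$ is even and $\prod_{j=1}^r (2(r-j)+1)!$ when $n$ is odd. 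Finally, $2^{k-r} = 2^{r-1}$ for $n = 2r$ and $2^{k-r} = 2^r$ for $n = 2r+1$, giving the prefactors claimed.

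No step poses a real obstacle; the only thing to be careful about is the correct indexing when folding the term $(k-1)!$ from the first product into the second to obtain the uniform product $\prod_{j=1}^r(2(r-j))!$ or $\prod_{j=1}^r(2(r-j)+1)!$. As a sanity check, one may verify that the same two formulas can be obtained independently from the Weyl dimension formula for $\SO{n}$ via the identification of $\dim V_\lambda$ with the number of lattice points of $GT^{\SO{n}}_{\SO{1}}(\lambda)$ (cf.\ Lemma \ref{lemma: invariants from Brattelli} applied with $m=1$), and then passing to the volume as the leading term; this provides an independent cross-check consistent with the reference to \cite{DerkKraft:ConstructiveInvTheory} noted in the paragraph preceding the corollary.
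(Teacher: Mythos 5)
Your proposal is correct and matches the paper's approach exactly: the paper states this corollary as the instance $k=n-1$ of Theorem \ref{thm: volume GT general} without writing out the specialization, and your bookkeeping of $\Omega_{n-1,n}$, the denominator $\prod_{j=1}^r((\Omega_{n-1,n})_j+r-j)!$, and the prefactor $2^{k-r}$ is accurate in both parity cases.
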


\subsection{Degrees of Stiefel manifolds via volumes of Gelfand--Tsetlin polytopes}
We have now completed all the preparatory work to determine the degree of $\St{k}{n}$ when $n \leq 2k-1$.

\begin{theorem}\label{thm: degformula}
 Let $n \leq 2k-1$. Then
 \[
  \deg \St{k}{n} = 2^k \det \begin{bmatrix}
  \begin{pmatrix}{(\Omega_{k,n})_i+(\Omega_{n-1,n})_j+2r-i-j}\\ {(\Omega_{k,n})_i+r-i}\end{pmatrix}
  \end{bmatrix}_{1 \leq i,j \leq r}
 \]
\end{theorem}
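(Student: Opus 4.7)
The plan is to substitute the volume formulas from Theorem~\ref{thm: volume GT general} and Corollary~\ref{corol: volume GT SOn} into the integral of Theorem~\ref{thm: degree as integral}, reduce the resulting integral to the standard simplex $\Delta_r$, and then apply Lemma~\ref{lemma: integral product alternating}. Rewriting the alternant from Corollary~\ref{corol: volume GT SOn} uniformly as $a_{\Omega_{n-1,n}}(\lambda)$ (consistent with $\Omega_{n-1,n}=(r,r-1,\ldots,1)$ for $n=2r+1$ and $\Omega_{n-1,n}=(r-1,r-2,\ldots,0)$ for $n=2r$), the integrand in Theorem~\ref{thm: degree as integral} becomes
\[
\frac{2^{(k-r)+(n-1-r)}}{\prod_j ((\Omega_{k,n})_j+r-j)!\,\prod_j ((\Omega_{n-1,n})_j+r-j)!}\,a_{\Omega_{k,n}}(\lambda)\,a_{\Omega_{n-1,n}}(\lambda).
\]

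Next I reduce the integration over $\calC\cap W$ to $\Delta_r$. When $n=2r+1$ is odd, $W=W_{B_r}$ is a fundamental domain for the $S_r$-action on $\calC\cap\bbR^r_{\geq 0}=\Delta_r$; since the integrand is $S_r$-symmetric (a product of two $S_r$-alternating polynomials), $\int_{\calC\cap W}=\tfrac{1}{r!}\int_{\Delta_r}$. When $n=2r$ is even, $W=W_{D_r}$ admits configurations with $\lambda_r<0$, and the key observation is that the interlacing inequalities \eqref{eq: GTinequalities3} defining the Gelfand--Tsetlin polytope only see $|\lambda_r|$; consequently the formula from Theorem~\ref{thm: volume GT general} really computes $\text{const}\cdot a_{\Omega_{k,n}}(\lambda_1,\ldots,|\lambda_r|)$. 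Splitting $W_{D_r}\cap\calC$ into the two halves according to the sign of $\lambda_r$ and applying $\lambda_r\mapsto-\lambda_r$ in the negative half (using that the exponents of $a_{\Omega_{n-1,n}}$ are all even, so that factor is invariant under this substitution), both halves contribute identically, yielding $\int_{\calC\cap W_{D_r}}=\tfrac{2}{r!}\int_{\Delta_r}$.

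Finally, Lemma~\ref{lemma: integral product alternating} evaluates the simplex integral as $\tfrac{r!}{(r^2+|\Omega_{k,n}|+|\Omega_{n-1,n}|)!}\det\bigl[((\Omega_{n-1,n})_i+(\Omega_{k,n})_j+2r-i-j)!\bigr]$. A direct calculation, case-split on parity, gives the identity $r^2+|\Omega_{k,n}|+|\Omega_{n-1,n}|=\binom{n}{2}-\binom{n-k}{2}=N$, so the $N!$ from Theorem~\ref{thm: degree as integral} cancels the denominator factorial from Lemma~\ref{lemma: integral product alternating}. The residual $r!$ cancels the $1/r!$ from the domain reduction in the odd case, or combines with the $2/r!$ in the even case; factoring row $i$ and column $j$ of the factorial determinant by $((\Omega_{k,n})_i+r-i)!$ and $((\Omega_{n-1,n})_j+r-j)!$ respectively converts it into the binomial determinant of the statement; and the powers of $2$ collect to $2^{(k-r)+(n-1-r)+\varepsilon}=2^k$, where $\varepsilon=0$ for $n$ odd and $\varepsilon=1$ for $n$ even.

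The main obstacle will be the domain reduction in the even-$n$ case. The product $a_{\Omega_{k,n}}\cdot a_{\Omega_{n-1,n}}$ is generally not invariant under $\lambda_r\mapsto -\lambda_r$, so naively interpreting the integrand as a polynomial on $W_{D_r}\cap\calC$ would yield an incorrect contribution from the $\lambda_r<0$ portion; recognizing that Theorem~\ref{thm: volume GT general} was derived under the convention that each rightmost GT label is nonnegative (so it genuinely computes the volume as a function of $|\lambda_r|$) is what salvages the computation and produces the uniform factor $\tfrac{2}{r!}$.
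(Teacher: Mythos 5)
Your proposal is correct and follows essentially the same route as the paper's proof: substitute Theorem~\ref{thm: volume GT general} and Corollary~\ref{corol: volume GT SOn} into Theorem~\ref{thm: degree as integral}, reduce $\int_{\calC\cap W}$ to $\frac{1}{r!}\int_{\Delta_r}$ (odd $n$) or $\frac{2}{r!}\int_{\Delta_r}$ (even $n$), apply Lemma~\ref{lemma: integral product alternating}, and redistribute the factorials into the rows and columns of the determinant. Your justification of the factor $\frac{2}{r!}$ in the even case --- via the invariance of the actual volume functions under $\lambda_r\mapsto-\lambda_r$ rather than of the literal polynomial product --- is in fact more careful than the paper's one-line appeal to the integrand being alternating, and correctly identifies the point that needs attention there.
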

\begin{proof}

From Theorem \ref{thm: degree as integral} we have
 \[
\deg(\St{k}{n}) = N! \int_{\calC \cap W} \vol\left(GT^{\SO{n}}_{\SO{n-k}}(\lambda)\right)\cdot \vol\left(GT^{\SO{n}}_{\SO{1}} (\lambda)\right) d\lambda,
 \]
and from Theorem \ref{thm: volume GT general} we write
 \[
\deg(\St{k}{n}) = \frac{ N!2^{k+n-1-2r}}{\prod_{j=1}^r((\Omega_{k,n})_j+r-j)!\cdot ((\Omega_{n-1,n})_j+r-j)!}\int_{\calC \cap W} a_{\Omega_{k,n}}(\lambda) a_{\Omega_{n-1,n}}(\lambda) d\lambda.
 \]
\underline{When $n=2r$ is even},
 \[
\mathcal C \cap  W=\{(\lambda_1,\ldots,\lambda_r)|\lambda_1 \geq \lambda_2 \geq \cdots \geq \lambda_{r-1} \geq |\lambda_r|\}.  
 \]
 Since the integrand is alternating in $\lambda$, the integral over $\mathcal C \cap W$ is equal to $\frac{2}{r!}$ times the integral over $\Delta_r$ and so we may write
  \[
\deg(\St{k}{n}) =  \frac{N! \cdot 2^{k}}{r! \cdot \prod_{j=1}^r((\Omega_{k,n})_j+r-j)! \cdot ((\Omega_{n-1,n})_j+r-j)!}\int_{\Delta_r} a_{\Omega_{k,n}}(\lambda) a_{\Omega_{n-1,n}}(\lambda) d\lambda.
 \]
We compute this integral using Lemma \ref{lemma: integral product alternating}:
\[
 \int_{\Delta_r} a_{\Omega_{k,n}}(\lambda) a_{\Omega_{n-1,n}}(\lambda) d\lambda = \frac{r!}{(r^2 + | \Omega_{k,n}|+|\Omega_{n-1,n}| )!} \det M = \frac{r!}{N!} \det M
\]
where $M$ is the $r \times r$ matrix with $(i,j)$-th entry $M_{i,j}=((\Omega_{k,n})_i+(\Omega_{n-1,n})_j+2r-i-j)!$.

This yields
\[
\deg(\St{k}{n}) =2^k \cdot \frac{1}{\prod_{j=1}^r((\Omega_{k,n})_j+r-j)!((\Omega_{n-1,n})_j+r-j)!} \cdot \det M.
\]
Distributing the factor $1/((\Omega_{k,n})_j+r-j)!$ in the $j$-th column of the matrix and the factor $1/((\Omega_{n-1,n})_i+r-i)!$ in the $i$-th row provides the desired determinant when $n$ is even.

\underline{When $n=2r+1$ is odd}, the proof is essentially the same. The only difference is that
\[
\mathcal C \cap  W = \{(\lambda_1,\ldots,\lambda_r)| \lambda_1 \geq \lambda_2 \geq \cdots \geq \lambda_r \geq 0\} 
\]
therefore the integral over $\mathcal C \cap  W$ equals $\frac{1}{r!}$ times the integral over $\Delta_r$. Since in this case $2r+1 = n$, the power of $2$ simplifies to $2^k$ as was the case when $n$ was odd.
\end{proof}

\subsection{Non-intersecting lattice path interpretation}
As is the case of the formula for $\deg(\SO{n})$ in \cite{BBBKR:DegreeSOn}, the result of Theorem \ref{thm: degformula} can be interpreted combinatorially in terms of non-intersecting lattice paths. We recall the Lindstr\"om--Gessel--Viennot Lemma (see e.g. \cite[Thm. 2.7.1]{Stanley:EnumerativeCombinatoricsVol1}):

\begin{lemma}[Lindstr\"om--Gessel--Viennot \cite{Lind:VectRepIndMatroids,GesselViennot}]\label{lemma: GV}
Let $A=\{a_1,\ldots,a_r\}$ and $B=\{b_1,\ldots,b_r\}$ be sets of points in $\mathbb{Z}^2$. Let $M_{i,j}$ denote the number of paths from $a_i$ to $b_j$ in the lattice $\mathbb{Z}^2$ using unit steps in only north and east directions. If the only way to connect all points in $A$ to all points in $B$ via non-intersecting paths is by connecting $a_i$ to $b_i$ then the number of ways to do this is given by $\det( [ M _{i,j}]_{i,j=1 \vvirg r})$.
\end{lemma}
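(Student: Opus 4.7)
The plan is to give the classical sign-reversing involution proof of Lindström--Gessel--Viennot. Expand the determinant as
\[
\det M = \sum_{\sigma \in \mathfrak{S}_r} \sgn(\sigma) \prod_{i=1}^r M_{i,\sigma(i)}.
\]
Since $M_{i,j}$ counts north-east lattice paths from $a_i$ to $b_j$, the product $\prod_i M_{i,\sigma(i)}$ enumerates tuples $\Pi = (\pi_1,\dots,\pi_r)$ of lattice paths with $\pi_i\colon a_i \to b_{\sigma(i)}$. Thus the determinant is a signed count over pairs $(\sigma,\Pi)$, with sign $\sgn(\sigma)$. Partition this set of pairs into \emph{non-intersecting} families (no two $\pi_i$ share a lattice point) and \emph{intersecting} families.

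The main step is to define a sign-reversing involution $\iota$ on intersecting families. Given an intersecting $(\sigma,\Pi)$, select the smallest index $i$ such that $\pi_i$ meets some other $\pi_j$, then the smallest $j>i$ with $\pi_i\cap \pi_j\neq\emptyset$, and let $P$ be the lexicographically first lattice point in $\pi_i\cap\pi_j$. Interchange the tails of $\pi_i$ and $\pi_j$ past $P$: the result is a new family $\Pi'$ with $\pi_i'\colon a_i\to b_{\sigma(j)}$ and $\pi_j'\colon a_j\to b_{\sigma(i)}$, all other paths unchanged, and new permutation $\sigma'=\sigma\circ(i\,j)$. Since tail-swapping preserves the set of lattice points visited by the union of paths and only relabels which path passes through each, the minimal choice of $(i,j,P)$ is the same for $\Pi'$, so $\iota^2=\id$. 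The permutation sign flips by $-1$, so $\iota$ is sign-reversing. Consequently the contribution of all intersecting families to $\det M$ cancels.

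The only surviving terms are non-intersecting families. By the hypothesis of the lemma, a non-intersecting family can exist only when $\sigma$ is the identity, so each surviving term has weight $+1$. Therefore $\det M$ equals the number of non-intersecting families $(\pi_1,\dots,\pi_r)$ with $\pi_i\colon a_i\to b_i$, as claimed.

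The only delicate point is verifying the three properties of $\iota$: (i) the tail-swap produces valid monotone lattice paths, which is immediate because the concatenation of two north-east paths sharing a common point is again a north-east path; (ii) the selection of $(i,j,P)$ is invariant under the swap, which holds because the multiset of edges traversed by the family is unchanged; and (iii) $\iota^2=\id$, which follows from (ii). Once these are checked, the cancellation and the conclusion are immediate.
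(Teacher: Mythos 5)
The paper does not actually prove this lemma; it is quoted from the literature (Lindstr\"om, Gessel--Viennot, and Stanley's book), so there is no in-paper argument to compare against. Your overall strategy --- expand the determinant, interpret each term as a signed family of paths, and cancel the intersecting families by a tail-swapping involution --- is indeed the standard proof of this statement.

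However, the involution you define is not actually an involution, so your step (ii) fails as stated. You pick the smallest $i$ such that $\pi_i$ meets another path, then the smallest $j>i$ with $\pi_i\cap\pi_j\neq\emptyset$, then swap tails at the first point $P$ of $\pi_i\cap\pi_j$. After the swap, $\pi_i'$ acquires the tail of $\pi_j$ beyond $P$, and that tail may meet some $\pi_l$ with $i<l<j$ that the original $\pi_i$ did not meet; the rule applied to $\Pi'$ then selects the pair $(i,l)$ rather than $(i,j)$, so $\iota^2\neq\id$. Concretely: let $\pi_1$ pass through $(0,0),\dots,(4,0),(4,1),(4,2)$, let $\pi_3$ pass through $(2,-2),(2,-1),(2,0),(3,0),(4,0),(4,1),(5,1)$, and let $\pi_2$ pass through $(5,-1),(5,0),(5,1),(6,1)$. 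Then $\pi_1\cap\pi_2=\emptyset$, so your rule picks $(i,j)=(1,3)$ and $P=(2,0)$; but after the swap $\pi_1'$ passes through $(5,1)\in\pi_2$, so the rule applied to the new family picks $(1,2)$ instead. Your justification --- that the union of visited lattice points is unchanged --- is true but insufficient, because your selection rule depends on which \emph{labelled} path visits which point, and that labelling is exactly what the swap alters. The standard repair is to choose $P$ before $j$: let $i$ be minimal such that $\pi_i$ meets another path, let $P$ be the first point of $\pi_i$ (in path order) lying on another path, and let $j$ be the minimal index with $P\in\pi_j$. Since the initial segment of $\pi_i$ up to $P$ is untouched by the swap and the set of paths through $P$ is unchanged, this triple is swap-invariant and $\iota^2=\id$. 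With that correction the rest of your argument (sign reversal, survival only of non-intersecting families, and the hypothesis forcing $\sigma=\mathrm{id}$) goes through.
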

\begin{example}\label{ex: St(4,6) nilps}
Consider the point configurations $A=\{(-3,0),(-2,0),(0,0)\}$, and  $B=\{(0,4),(0,2),(0,0)\}$. 
Then the matrix $M$ is given by 
\[
M={
\left[ \begin{array}{ccc}
\pbinom{7}{3} & \pbinom{5}{3} & \pbinom{3}{3} \\ \\
\pbinom{6}{2} & \pbinom{4}{2} & \pbinom{2}{2} \\ \\
\pbinom{4}{0} & \pbinom{2}{0} & \pbinom{0}{0}
\end{array} \right]} = 
\left[ 
\begin{array}{ccc}
35 & 10 & 1 \\ 
15& 6 & 1 \\ 
1 & 1 & 1
\end{array}\right].
\]
Its determinant is $44$. There is only one path from $A_3=(0,0)$ to $B_3=(0,0)$ and so a collection of non-intersecting lattice paths is uniquely determined by a pair of paths, one from from $A_1$ to $B_1$ and another from $A_2$ to $B_2$, not passing through $(0,0)$. 
\begin{figure}[!htpb]
\begin{center}
\includegraphics[scale=0.12]{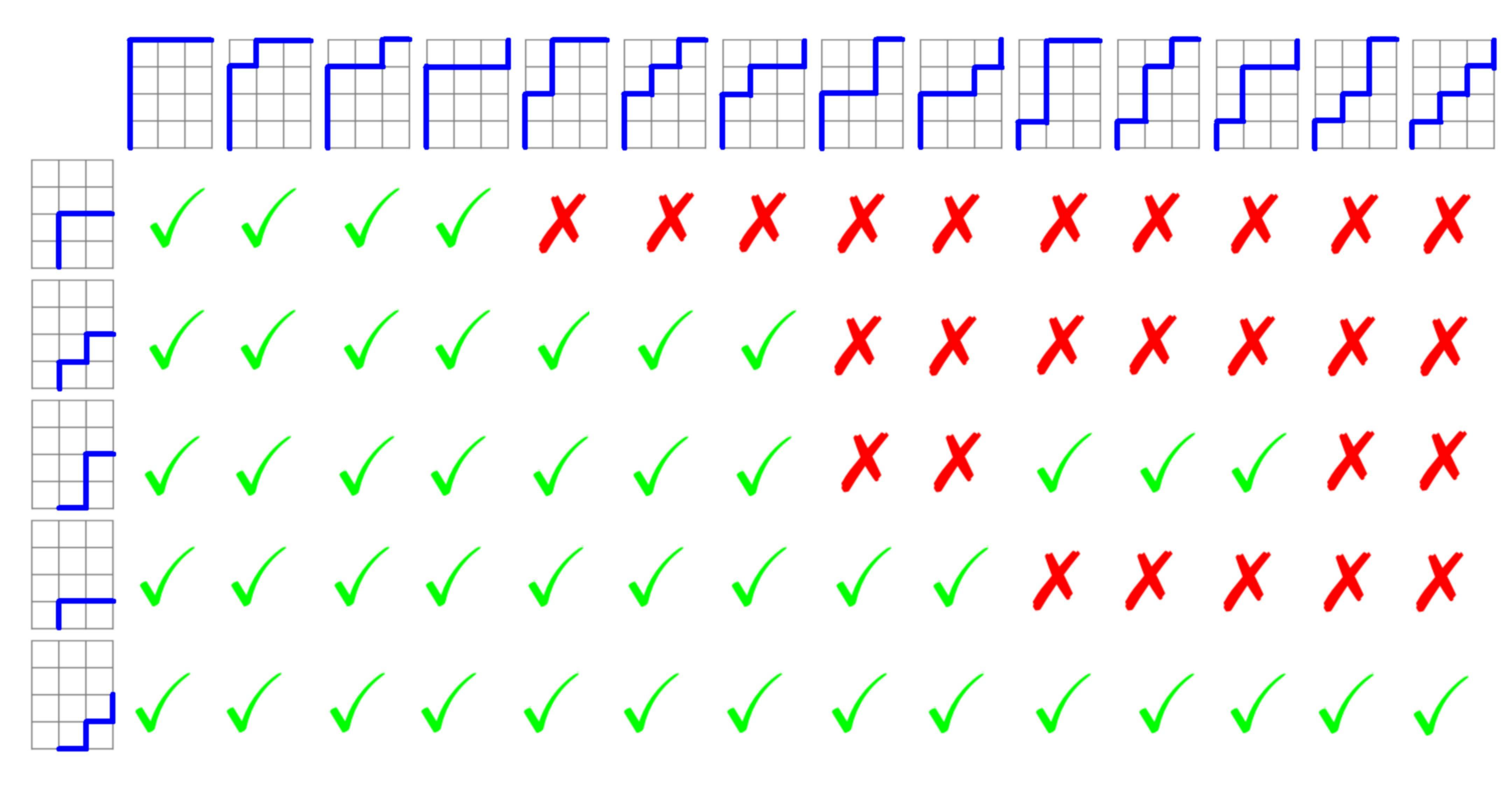}
\caption{All non-intersecting lattice paths (green) from  $A=\{(-3,0),(-2,0),(0,0)\}$ to $B=\{(0,4),(0,2),(0,0)\}$ }\label{fig: nilps}
\end{center}
\end{figure}

Figure \ref{fig: nilps} displays paths from $A_1$ to $B_1$ in the first row and paths from $A_2$ to $B_2$ in the first column. A green \checkmark indicates that the pair together with the stationary path at $(0,0)$ forms a collection of three non-intersecting lattice paths. Indeed, there are $44$ green {\checkmark}'s.
\end{example}

\begin{lemma}\label{lemma: GVmatrix is one from theorem}
Fix $k, n$ with $k+1 \leq n \leq 2k-1$. Let 
\begin{align*}
A &= \{ (- ( (\Omega_{k,n})_j + r - j),0) : j =1 \vvirg r\} \\ 
B &= \{(0,n-2j) : j =1 \vvirg r\}.
\end{align*}
The matrix in Theorem \ref{thm: degformula} is the matrix in the Lindstr\"om--Gessel--Viennot Lemma applied to $A$ and $B$. 
\end{lemma}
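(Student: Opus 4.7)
The approach is a direct entrywise comparison. First I would recall the classical fact that the number of monotone lattice paths (unit north and east steps) from $(-a,0)$ to $(0,b)$ in $\bbZ^2$ equals $\binom{a+b}{a}$. Applied to the sets $A$ and $B$ in the statement, with $a_i = ( -((\Omega_{k,n})_i + r - i), 0 )$ and $b_j = (0, n-2j)$, this gives
\[
M_{i,j} = \binom{\bigl((\Omega_{k,n})_i + r - i\bigr) + (n-2j)}{(\Omega_{k,n})_i + r - i}.
\]
Comparing with the matrix in Theorem~\ref{thm: degformula}, whose $(i,j)$-entry is $\binom{(\Omega_{k,n})_i + (\Omega_{n-1,n})_j + 2r - i - j}{(\Omega_{k,n})_i + r - i}$, the lemma reduces to the single numerical identity
\[
(\Omega_{n-1,n})_j + r - j = n - 2j \qquad \text{for } j = 1, \dots, r.
\]

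The second step is to verify this identity using the closed form of $\Omega_{k,n}$ recorded after \eqref{def: omega}. I handle the two parities separately. When $n = 2r$, the partition $\Omega_{n-1,n} = \Omega_{2r-1,2r}$ has $n-k = 1$ leading entry equal to $k-r = r-1$ and then the staircase $r-2, r-3, \dots, 0$, so $(\Omega_{n-1,n})_j = r - j$, giving $(\Omega_{n-1,n})_j + r - j = 2r - 2j = n - 2j$. When $n = 2r+1$, the partition $\Omega_{n-1,n} = \Omega_{2r,2r+1}$ has one leading entry equal to $k-r = r$ and then the staircase $r-1, r-2, \dots, 1$, so $(\Omega_{n-1,n})_j = r - j + 1$, giving $(\Omega_{n-1,n})_j + r - j = 2r - 2j + 1 = n - 2j$. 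In both cases the identity holds.

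Combining these two computations, the binomial coefficients defining the two matrices agree for every $(i,j)$, and the lemma follows. There is no genuine obstacle here; the only real care needed is to track the two parities of $n$ when reading off $(\Omega_{n-1,n})_j$ from the closed form, and to make sure the convention for the endpoints of the paths (north-east steps from $(-a,0)$ to $(0,b)$) matches the binomial coefficient appearing in Theorem~\ref{thm: degformula}.
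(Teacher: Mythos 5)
Your proof is correct and follows essentially the same route as the paper: identify the $(i,j)$-entry of the Lindstr\"om--Gessel--Viennot path matrix as $\binom{a_i+b_j}{a_i}$ and reduce to the identity $(\Omega_{n-1,n})_j + r - j = n-2j$. The paper simply asserts this identity with a ``Notice that,'' whereas you verify it explicitly in both parities from the closed form of $\Omega_{k,n}$; that is a harmless (indeed helpful) elaboration, not a different argument.
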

\begin{proof}
From the point $(-i,0)$ to $(0,j)$ there are $\binom{i+j}{i}$ paths. Notice that $n-2j=(\Omega_{n-1,n})_j+r-j$. These facts applied to $A$ and $B$ directly prove the result. 
\end{proof}

\begin{corollary}\label{corol: degreebylatticepaths}
For $k+1 \leq n \leq 2k-2$, let $L_{k,n}$ denote the number of non-intersecting lattice paths  
\[
\text{from } A=\{(-(\Omega_{k,n})_j-r+j,0)\}_{j=1}^r\text{ to }B=\{(0,n-2j)\}_{j=1}^r 
\]
 in $\mathbb{Z}^2$ consisting of unit steps in north and east directions. The degree of $\St{k}{n}$ is given by 
\[
\deg(\St{k}{n}) = 2^k \cdot L_{k,n}. 
\]
\end{corollary}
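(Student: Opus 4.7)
The plan is to recognize this as an essentially direct consequence of Theorem~\ref{thm: degformula}, Lemma~\ref{lemma: GVmatrix is one from theorem}, and the Lindstr\"om--Gessel--Viennot Lemma (Lemma~\ref{lemma: GV}); the only non-automatic piece is checking that the hypothesis of LGV, namely that the identity matching is the unique non-crossing one, really does hold for the point sets $A$ and $B$ described in the statement.

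First I would start from Theorem~\ref{thm: degformula}, which writes
\[
 \deg \St{k}{n} = 2^k \det\left[ \binom{(\Omega_{k,n})_i + (\Omega_{n-1,n})_j + 2r - i - j}{(\Omega_{k,n})_i + r - i} \right]_{1 \le i, j \le r}.
\]
Setting $a_i = (\Omega_{k,n})_i + r - i$ and $b_j = (\Omega_{n-1,n})_j + r - j$, the $(i,j)$-th entry is $\binom{a_i + b_j}{a_i}$, which is exactly the number of north--east lattice paths from $(-a_i, 0)$ to $(0, b_j)$. Using the closed form of $\Omega_{n-1,n}$ (obtained by a quick specialization of the formula in \eqref{def: omega}), one verifies $b_j = n - 2j$ in both parities of $n$, matching the $B$ in the statement; similarly, unwinding $\Omega_{k,n}$ yields $a_i = k - i$ for $i \le n-k$ and $a_i = n - 2i$ for $n - k + 1 \le i \le r$, matching the $A$ in the statement. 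This is exactly the content of Lemma~\ref{lemma: GVmatrix is one from theorem}, so the determinant above coincides with the Lindstr\"om--Gessel--Viennot matrix attached to $A$ and $B$.

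Next, I would verify the LGV non-crossing hypothesis. The source set $A$ lies on the negative $x$-axis and the target set $B$ lies on the nonnegative $y$-axis, so any two paths $a_i \rightsquigarrow b_{\sigma(i)}$ and $a_j \rightsquigarrow b_{\sigma(j)}$ with $a_i$ closer to the origin than $a_j$ and $b_{\sigma(i)}$ farther from the origin than $b_{\sigma(j)}$ are forced to cross. Therefore only the identity matching can admit a non-crossing realization, provided the sequences $(a_i)$ and $(b_j)$ are strictly decreasing when read in the order given. For $(b_j) = (n-2, n-4, \dots, n-2r)$ this is immediate. For $(a_i)$ it is obvious within each of the two defining ranges ($a_{i+1} - a_i = -1$ for $i < n-k$ and $a_{i+1} - a_i = -2$ for $i \ge n-k+1$); at the junction one computes $a_{n-k} = 2k-n$ and $a_{n-k+1} = 2k - n - 2$, so the decrease persists across $i = n-k$. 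Hence $a_1 > a_2 > \cdots > a_r \ge 0$ and the LGV hypothesis is satisfied.

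Finally, applying Lemma~\ref{lemma: GV} identifies the determinant with $L_{k,n}$, the number of collections of non-intersecting lattice paths from $A$ to $B$, and substituting into Theorem~\ref{thm: degformula} gives $\deg \St{k}{n} = 2^k \cdot L_{k,n}$. The only real obstacle, the strict monotonicity of $(a_i)$ across the piecewise-defined transition at $i = n-k$, is handled by the direct comparison above; everything else is bookkeeping.
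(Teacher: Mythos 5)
Your proposal is correct and follows the same route as the paper: the paper's proof simply cites Lemma~\ref{lemma: GVmatrix is one from theorem} to identify the determinant in Theorem~\ref{thm: degformula} with the Lindstr\"om--Gessel--Viennot path matrix for $A$ and $B$, and then applies Lemma~\ref{lemma: GV}. Your additional verification that $(a_i)$ is strictly decreasing across the junction $i=n-k$, so that the identity matching is the unique non-crossing one, is a hypothesis of LGV that the paper leaves implicit, and your check of it is accurate.
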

\begin{proof}
By Lemma \ref{lemma: GVmatrix is one from theorem}, the matrix in Theorem \ref{thm: degformula} is the matrix appearing in Lemma \ref{lemma: GV}. Apply Lemma \ref{lemma: GV} to the sets of $A$ and $B$ to conclude.
\end{proof}

\begin{example}[Degree of $\St{4}{6}$]\label{ex: degSt46 calculation}
Let $k=4$ and $n=6$.
 Example \ref{ex: St(4,6) nilps} calculated that $N(4,6)=44$. Applying Corollary \ref{corol: degreebylatticepaths} computes the degree of $\St{4}{6}$ to be $$\deg(\St{4}{6})=2^{4}\cdot 44=704.$$
\end{example}

\section{Conclusions}
The statements of Theorem \ref{thm: stiefel in bezout bound} ($n \geq 2k-1$) and Corollary \ref{corol: degreebylatticepaths} ($n \leq 2k-1$) combine to produce the proof of Theorem \ref{thm: main theorem}. We write it explicitly for completeness.

\begin{proof}[Proof of Theorem \ref{thm: main theorem}]
The first half of Theorem \ref{thm: main theorem} is given directly by Theorem \ref{thm: stiefel in bezout bound}. The second half is given by writing $\Omega_{k,n}$ in the point configuration in Corollary \ref{corol: degreebylatticepaths} according to its expression in \eqref{def: omega closed}.
\end{proof}

Theorem \ref{thm: main theorem} in the case $k=n-1$ gives the following corollary.

\begin{corollary}\label{cor: SO equals St for k n-1}
The degree of $\SO{n}$ is equal to the degree of $\St{n-1}{n}$. 
\end{corollary}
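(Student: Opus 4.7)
The plan is to give a short representation-theoretic proof that bypasses a direct comparison of the lattice-path formula for $\deg \St{n-1}{n}$ with the formula for $\deg \SO{n}$ from \cite{BBBKR:DegreeSOn}. The key observation is that both $\SO{n}$ and $\St{n-1}{n}$ arise as $\SO{n}$-orbits with \emph{trivial} stabilizer, embedded in ambient $\SO{n}$-representations whose weight polytopes coincide.

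First I would realize $\SO{n}$ as the orbit of the identity matrix $I_n \in \Mat_{n\times n}$ under the right multiplication action of $\SO{n}$; this orbit is Zariski closed (it is $\SO{n}$ itself), so $I_n$ is semistable, and its stabilizer is trivial. On the other side, as explained in Section \ref{section: prelim stiefel}, $\St{n-1}{n} \simeq \SO{n}/\SO{1}$, and $\SO{1}$ is the trivial group. Therefore the Algebraic Peter--Weyl Theorem \eqref{eq: peter weyl general} produces identical decompositions
\[
\bbC[\SO{n}] \;\simeq\; \bbC[\St{n-1}{n}] \;\simeq\; \bigoplus_{\lambda \in \Lambda^{\SO{n}}_+} V_\lambda \otimes V_\lambda^*
\]
as ungraded $\SO{n}$-representations.

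Next I would apply Lemma \ref{lemma: appearing weights} to control the filtration induced by the ambient grading in each case. The ambient $\SO{n}$-representations are $\Mat_{n\times n} \simeq (\bbC^n)^{\oplus n}$ and $\Mat_{(n-1)\times n} \simeq (\bbC^n)^{\oplus (n-1)}$, both direct sums of copies of the defining representation $\bbC^n = V_{(1)}$. In particular, the set of integral weights that occur is the same in both, and the convex hull $\calC$ is the cross-polytope with vertices $\pm e_1 \vvirg \pm e_r$ in either case. Hence Lemma \ref{lemma: appearing weights} yields
\[
\bbC[\SO{n}]_{\leq j} = \bigoplus_{\lambda \in j\calC \cap \Lambda^{\SO{n}}_+} V_\lambda \otimes V_\lambda^* = \bbC[\St{n-1}{n}]_{\leq j}
\]
for every $j$, so the two Hilbert functions (and thus the two Hilbert polynomials) agree.

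To conclude, I would invoke that $\dim \SO{n} = \dim \St{n-1}{n} = \binom{n}{2}$, so the equality of Hilbert polynomials forces equality of their leading coefficients, and hence $\deg \SO{n} = \deg \St{n-1}{n}$. I do not anticipate a substantive obstacle: once the trivial stabilizer and the matching weight polytope are identified on both sides, everything follows directly from the machinery set up in Section \ref{section: repdegree}, and no combinatorial identification of the two determinantal formulas is needed.
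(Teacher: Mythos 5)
Your proof is correct, but it takes a genuinely different route from the paper. The paper deduces the corollary from Theorem \ref{thm: main theorem} together with the formula for $\deg \SO{n}$ from \cite{BBBKR:DegreeSOn}, and then offers a second, geometric argument: the linear projection $\bbP(\Mat_{n\times n}\oplus\bbC)\dashto\bbP(\Mat_{(n-1)\times n}\oplus\bbC)$ deleting the first row restricts to a regular, generically one-to-one map $\bar{\SO{n}}\to\bar{\St{n-1}{n}}$ (regularity because $\bar{\SO{n}}$ misses the center of projection, injectivity because the deleted row is recovered as the unique positively oriented unit normal), whence equality of degrees by the projection formula. You instead observe that both varieties are closed $\SO{n}$-orbits with trivial stabilizer --- $\SO{n}=\SO{n}\cdot \id_n\subseteq\Mat_{n\times n}$ and $\St{n-1}{n}\simeq\SO{n}/\SO{1}\subseteq\Mat_{(n-1)\times n}$ --- sitting inside ambient representations that are direct sums of copies of the self-dual defining representation $\bbC^n$, so they share the same weight polytope $\calC$; Lemma \ref{lemma: appearing weights} then gives $\bbC[\SO{n}]_{\leq j}=\bigoplus_{\lambda\in j\calC\cap\Lambda_+^{\SO{n}}}V_\lambda\otimes V_\lambda^*=\bbC[\St{n-1}{n}]_{\leq j}$ for all $j$. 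This is a valid third proof, and it actually yields more than the corollary claims: the two affine Hilbert functions coincide in every degree, not merely in their leading terms, and no comparison of determinantal or lattice-path formulas is needed. What it costs is reliance on the full machinery of Section \ref{section: repdegree} (Peter--Weyl plus Lemma \ref{lemma: appearing weights}, which is stated for semisimple $G$ and so formally excludes the easy case $n=2$, where $\SO{2}$ is a torus and the equality $\deg\SO{2}=\deg\St{1}{2}=2$ must be checked by hand), whereas the paper's geometric argument is self-contained and produces an explicit degree-one map between the two varieties, which is useful in its own right.
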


We provide a geometric proof of this fact as well.

\subsection{A geometric argument for the result of Corollary \ref{cor: SO equals St for k n-1}}
Consider the rational map
\[
\pi: \bbP (\Mat_{n \times n} \oplus \bbC) \dashto \bbP ( \Mat_{(n-1) \times n} \oplus \bbC) 
\]
sending an $n \times n$ matrix to the submatrix obtained by removing the first row. In other words, this is the projection with center $L = \{ (A,z) : z=0, A^{(i)} = 0 \text{ for } i > 1\}$, where $A^{(i)}$ denotes the $i$-th row of the $n \times n$ matrix $A$.

The restriction 
\[
 \phi: \bar{\SO{n}} \dashto \bbP ( \Mat_{(n-1) \times n} \oplus \bbC)
\]
surjects onto $\bar{\St{n-1}{n}}$. Since $\dim \SO{n} = \dim \St{n-1}{n}$, $\phi$ is generically finite.

We show that $\phi$ is regular. To see this, it suffices to show that $\bar{\SO{n}}$ does not intersect the center of the projection $L$. Suppose $(A,z) \in L\cap \bar{\SO{n}}$. In particular, $z = 0$ and $A$ is a matrix which is nonzero only in its first row and such that $AA^T = 0 \cdot \id_n = 0$. Notice that if $(A,z) \in \bar{\SO{n}}$, then $AA^T = A^TA$. This guarantees that if $A$ is supported on a single row and $AA^T = 0$, then $A = 0$ and we conclude that $\bar{\SO{n}} \cap L = \emptyset$. 

Moreover, $\phi$ is generically one-to-one. Indeed, let $B \in \St{k}{n}$ and consider $(B,1) \in \bar{\St{k}{n}}$, so that $BB^T = \id_{n-1}$. The rows of $B$ form a set of $n-1$ orthonormal vectors in $\bbC^{n}$; let $u$ be the unique vector in $\bbC^n$ that is orthogonal to the vectors of $B$, has norm equal to $1$ and forms a positively oriented basis together with the vectors of $B$. In particular, the matrix $A$ obtained by placing the vector $u$ above the matrix $B$ is an $n\times n$ orthogonal matrix with determinant $1$, and it is the unique preimage of $B$ via $\phi$. This shows $\deg \phi = 1$.

Applying iteratively \cite[Thm. 5.11(a)]{Mum:ComplProjVars}, we conclude
\[
 \deg \SO{n} = \deg \phi(\SO{n}) = \deg \St{n-1}{n}.
\]

\subsection{A final connection to the combinatorics of domino tilings}

The case $n=2k-1$ appearing as the overlap of Sections \ref{section: bezout range} and \ref{section: repdegree} produces the following simple combinatorial identity.
\begin{corollary}\label{corol: aztec}
\[
2^{\binom{r+1}{2}}=\det \left [ \binom{2i}{j} \right]_{i,j=1,\ldots,r} 
\]
\end{corollary}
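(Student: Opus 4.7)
My plan is to establish the identity by exploiting the overlap of the two regimes in Theorem \ref{thm: main theorem} at the boundary case $n = 2k - 1$. Writing $k = r+1$ so that $n = 2r+1$, both Theorem \ref{thm: stiefel in bezout bound} and Theorem \ref{thm: degformula} compute $\deg \St{r+1}{2r+1}$. The former yields the B\'ezout value $2^{\binom{r+2}{2}}$, while the latter yields $2^{r+1} \det M$ for a certain binomial-coefficient matrix $M$. Equating these will give a closed form for $\det M$.

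To write $M$ explicitly, I substitute the closed-form values from the odd case of $\Omega_{k,n}$: one has $(\Omega_{r+1,2r+1})_i = 1$ for all $i$ and $(\Omega_{2r,2r+1})_j = r+1-j$. Plugging into Theorem \ref{thm: degformula} gives
\[
M_{ij} = \binom{3r+2-i-2j}{r+1-i},
\]
and equating the two expressions for $\deg\St{r+1}{2r+1}$ then yields
\[
\det\left[\binom{3r+2-i-2j}{r+1-i}\right]_{i,j=1}^r = 2^{\binom{r+2}{2}-(r+1)} = 2^{\binom{r+1}{2}}.
\]

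It remains to identify this value with $\det[\binom{2i}{j}]_{i,j=1}^r$. I plan to do this via a Vandermonde-type side calculation: augmenting the matrix by the row and column indexed by $0$ yields an $(r+1)\times(r+1)$ matrix $[\binom{2i}{j}]_{i,j=0}^r$ whose first row is $(1,0,\ldots,0)$ (since $\binom{0}{j}=\delta_{j,0}$), so expansion along that row gives that its determinant equals the original $r \times r$ determinant. Since $\binom{x}{j}$ is a polynomial of degree $j$ in $x$ with leading coefficient $1/j!$, column operations reduce the augmented matrix to $\bigl(\prod_{j=0}^r j!\bigr)^{-1}$ times the Vandermonde matrix in the nodes $0,2,4,\ldots,2r$, producing
\[
\det[\binom{2i}{j}]_{i,j=0}^r = \frac{\prod_{0\leq i<j\leq r}(2j-2i)}{\prod_{j=0}^r j!} = \frac{2^{\binom{r+1}{2}}\prod_{j=1}^r j!}{\prod_{j=0}^r j!} = 2^{\binom{r+1}{2}}.
\]

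The main obstacle is that the matrix $M$ coming naturally out of Theorem \ref{thm: degformula} is not obviously equal (by a simple permutation of rows or columns) to the matrix $[\binom{2i}{j}]$ appearing in the corollary's statement, so one cannot directly read off the identity just from equating the two degree formulas. The Vandermonde-type evaluation of $\det[\binom{2i}{j}]$ above sidesteps this by independently confirming that the determinant equals $2^{\binom{r+1}{2}}$, which matches the value forced by the two Stiefel computations. A more combinatorial alternative would be to exhibit a bijection between the two non-intersecting lattice path families underlying the LGV interpretations of the two matrices --- both of which are related to tilings of the order-$r$ Aztec diamond --- but this route is considerably more involved.
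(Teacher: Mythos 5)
Your proof is correct, but it reaches the identity by a genuinely different route than the paper. The paper never evaluates $\det[\binom{2i}{j}]$ independently: it uses the lattice-path interpretation from Lemma \ref{lemma: GVmatrix is one from theorem}, observes that for $n=2k-1$ the first $r-j+1$ steps out of each starting point $A_j$ are forced to be vertical, and therefore that the path matrix for $(A,B)$ has the same determinant as the path matrix for the shifted configuration $\widetilde{A}=\{(-(r-j+1),r-j+1)\}_{j=1}^r$, which is exactly $[\binom{2i}{j}]$; equating the two degree formulas then \emph{derives} the value $2^{\binom{r+1}{2}}$. You instead compute $\det[\binom{2i}{j}]_{i,j=1}^r$ directly by augmenting with the index-$0$ row and column and reducing to the Vandermonde determinant at the nodes $0,2,\dots,2r$ --- a calculation that is correct ($\prod_{0\le i<j\le r}(2j-2i)=2^{\binom{r+1}{2}}\prod_{j=1}^r j!$, and the triangular change of basis contributes $\prod_{j=0}^r (1/j!)$) and which by itself already proves the stated identity, making your degree-theoretic preamble logically superfluous. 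The trade-off: your argument is elementary and self-contained, but it severs the link the corollary is meant to exhibit, namely that the power of two is \emph{forced} by the overlap of the two degree regimes; the obstacle you flag (matching the matrix $M$ of Theorem \ref{thm: degformula} to $[\binom{2i}{j}]$) is precisely what the paper resolves via the forced-vertical-steps observation, and without it your two computations of $2^{\binom{r+1}{2}}$ remain two parallel facts rather than one identity derived from the geometry.
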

\begin{proof}
When $n=2k-1$, the point configuration $A,B$ given by Lemma \ref{lemma: GVmatrix is one from theorem} has the property that the first $r-j+1$ steps beginning at $A_j$ must be vertical. Equivalently, the determinant of the path matrix associated to $A$ and $B$ is the same as the determinant of the path matrix associated to $\widetilde{A},B$ where $\widetilde{A} = \{(-(r-j+1),(r-j+1)\}_{j=1}^r$. The new path matrix is
\[
\mathcal P =  \left [ \binom{2i}{j} \right]_{i,j=1,\ldots,r}.
\]
We can express $\deg(\St{k}{2k-1})$ by Theorem \ref{thm: stiefel in bezout bound} as $2^{\binom{k+1}{2}}$ and by Theorem \ref{thm: degformula} as $2^k \det(\calP)$. We conclude
\[
 \det (\calP) = 2^{-k} \cdot 2^{\binom{k+1}{2}} = 2^{\binom{k}{2}} = 2^{\binom{r+1}{2}}.
\]

\end{proof}
We could only find the result of Corollary \ref{corol: aztec} in a comment in the sequence \texttt{A006125} in OEIS \cite{OEIS}. The \emph{Aztec diamond theorem} states that this power of two is the number of domino tilings of the Aztec diamond of order $n$. It was proved by Elkies, Kuperberg, Larsen, Propp in \cite{ElKuLaPr:AlternatingSignMatrices}. In \cite{EuFu}, Eu and Fu provide a proof of the Aztec diamond theorem using non-intersecting lattice paths, but they do not seem to use the path matrix in Corollary \ref{corol: aztec}. 

\section*{Acknowledgements} F.G. acknowledges financial support from the VILLUM FONDEN via the QMATH Centre of Excellence (Grant no. 10059). T.B. acknowledges financial support from the National Science Foundation (DMS-1501370). This collaboration began while the authors were visiting the Institute for Computational and Experimental Research in Mathematics in Providence, RI, during a semester long program on \emph{Nonlinear Algebra} in Fall 2018: we thank ICERM and the organizers for their support (NSF DMS-1439786) and for providing a wonderful research environment. We are grateful to Giorgio Ottaviani for his suggestions and comments on this project.

\bibliographystyle{amsalpha}
\bibliography{stiefel}

\end{document}